\newcommand{\la}{\lambda}
\newcommand{\Bb}{\mathcal{B}}
\newcommand{\Bbt}{\widetilde{\mathcal{B}}}
\newcommand{\ma}{\mathcal{A}}
\newcommand{\coa}{C(X_1,\ma_1)}
\newcommand{\cta}{C(X_2,\ma_2)}
\newcommand{\ps}{\operatorname{PS}}
\newcommand{\lxm}{\Lip(X,\mathcal{A})}
\newcommand{\lxom}{\Lip(X_1,\mathcal{A}_1)}
\newcommand{\lxtm}{\Lip(X_2,\mathcal{A}_2)}
\newcommand{\Lip}{\operatorname{Lip}}
\newcommand{\rdy}{\rho \circ \delta_{y}}
\newcommand{\yr}{Y_{\rho}}
\newcommand{\id}{\operatorname{Id}}
\newtheorem{theorem}{Theorem}[section]
\newtheorem{lemma}[theorem]{Lemma}
\newtheorem{cor}[theorem]{Corollary}
\newtheorem{prop}[theorem]{Proposition}
\theoremstyle{definition}
\newtheorem{example}[theorem]{Example}
\theoremstyle{remark}
\newtheorem{remark}[theorem]{Remark}
\begin{document}

\author{
Shiho~Oi
}
\address{
Department of Mathematics, Faculty of Science, Niigata University, Niigata
950-2181 Japan.
}
\email{shiho-oi@math.sc.niigata-u.ac.jp
}

\title[]
{Jordan $*$-homomorphisms on the spaces of continuous maps taking values in $C^{*}$-algebras 
}

\keywords{
Jordan $*$-homomorphisms, $C^{*}$-algebras, Lipschitz algebras
}

\subjclass[2020]{
47B48, 46E40, 47B49
}


\begin{abstract} 
Let $\ma$ be a unital $C^{*}$-algebra. We consider Jordan $*$-homomorphisms on $C(X, \ma)$ and Jordan $*$-homomorphisms on $\Lip(X,\ma)$. 
More precisely,  for any unital $C^{*}$-algebra $\ma$, we prove that  every Jordan $*$-homomorphism on $C(X,\ma)$ and every Jordan $*$-homomorphism on $\lxm$ is represented as a weighted composition operator by using the irreducible representations of $\ma$. In addition, when $\ma_1$ and $\ma_2$ are primitive $C^{*}$-algebras, we characterize the Jordan $*$-isomorphisms. 
These results unify and enrich previous works on algebra $*$-homomorphisms on $C(X, \ma)$ and $\Lip(X,\ma)$ for several concrete examples of $\ma$. 

\end{abstract}
\maketitle
\section{Introduction}
For any elements $a, b$ in a $*$-algebra, their Jordan product is defined by $a \circ b=\frac{ab+ba}{2}$. Let $\mathcal{B}_1$ and $\mathcal{B}_2$ be  $*$-algebras.   A Jordan $*$-homomorphism from $\mathcal{B}_1$ into $\mathcal{B}_2$ is a linear map $J:\mathcal{B}_1 \to \mathcal{B}_2$ such that $J(a \circ b)=J(a) \circ J(b)$ for any $a,b \in \mathcal{B}_1$ ( or equivalently, $J(a^2)=J(a)^2$ for any $a \in \mathcal{B}_1$) 
and $J(a)^{*}=J(a^{*})$ for any $a \in \mathcal{B}_1$. 
A well-known theorem of Kadison in \cite{Kad} tells that every unital surjective linear isometry between unital $C^{*}$-algebras is a Jordan $*$-isomorphism. Accordingly, Jordan $*$-homomorphisms are quite important and basic to an investigation. They form the foundation of $*$-algebra theories.  
Thus the study of Jordan $*$-homomorphisms between $*$-algebras has caused a great deal of interest. 

We now consider  $*$- algebras of vector-valued continuous functions. We 
emphasize that  on the one hand, the algebra $*$- homomorphisms on $*$- algebras of vector-valued continuous functions are well studied, on the other hand there are few results on Jordan $*$-homomorphisms on $*$- algebras of vector-valued continuous functions. 
The purpose of this paper is to investigate Jordan $*$-homomorphisms on two $*$-algebras of vector-valued functions. The first $*$-algebra  is $C(X, \ma)$ which is the space of all continuous maps from a compact Hausdorff space $X$ into a unital $C^{*}$-algebra $\ma$. More precisely, let $X$ be a compact Hausdorff space and $\ma$ be a unital $C^{*}$-algebra with the norm $\|\cdot\|_{\ma}$. Then
\[
C(X,\ma)=\{ F: X \to \ma \ | \ F\text{ is  continuous} \}.
\]
 It is easy to see that $C(X, \ma)$ is a unital $C^{*}$-algebra with the norm $\|F\|=\sup_{x \in X}\|F(x)\|_{\ma}$ for any $F \in C(X, \ma)$. The second $*$-algebra is $\Lip(X,\ma)$, which is the space of all Lipschitz maps from a compact metric space $X$ into a unital $C^{*}$-algebra $\ma$. More precisely, let $X$ be a compact metric space and $\ma$ be a unital $C^{*}$-algebra. Then
\[
\lxm=\{ F: X \to \ma \ | \ L(F):=\sup_{x \neq y \in X}\frac{\|F(x)-F(y)\|_{\ma}}{d(x,y)} < \infty \}.
\]
With $\|F\|_{L}=\sup_{x \in X}\|F(x)\|_{\ma}+L(F)$ for any $F \in \lxm$, $\lxm$ is a Banach algebra. Since we define the involution $*$ on $\lxm$ by $F^{*}(x)=[F(x)]^{*}$ for all $x \in X$,  $\lxm$ is a Banach $*$-algebra. 
Throughout the paper, $\ma$, $\ma_1$ and $\ma_2$ will be unital $C^{*}$-algebras. 

In \cite[Theorem 12]{LT}, Leung and Tang proved the following. For $i=1$ or $2$, let $X_i$ be realcompact spaces. 
Suppose that $T: C(X_1, \ma_1) \to C(X_2, \ma_2)$ is an algebra $*$-isomorphism so that $Z(F) \neq \emptyset \Leftrightarrow Z(TF) \neq \emptyset$, where $Z(F)=\{x \in X_i | F(x)=0 \}$ for each $F\in C(X_i,\ma_i)$ for $i=1,2$. Then there is a homeomorphism $\varphi: X_2 \to X_1$ and an algebra $*$-isomorphism $V_{y}: \ma_1 \to \ma_2$  for each $y \in X_2$ such that 
\begin{equation}\label{www}
TF(y)=V_y(F(\varphi(y))), \quad F \in C(X_1, \ma_1) \text{ and all } y \in X_2.
\end{equation}
Let us recall that an algebra $*$-homomorphism is a multiplicative linear operator that preserves the involution $*$.  It is natural to ask whether a Jordan $*$-isomorphism from $C(X_1, \ma_1)$ onto $C(X_2,\ma_2)$ 
can be represented as a weighted composition operator, which has a similar form to (\ref{www}). 
In this paper, we prove that every  Jordan $*$-isomorphism from $C(X_1, \ma_1)$ onto $C(X_2,\ma_2)$ is represented as a weighted composition operator by using the irreducible representations of $\ma_2$. In addition we characterize Jordan $*$-isomorphisms from $C(X_1, \ma_1)$ onto $C(X_2,\ma_2)$ if $\ma_1$ and $\ma_2$ are primitive $C^{*}$-algebras. 

We now introduce the study of  Jordan $*$-homomorphisms on $\lxm$.  As pointed out by Botelho and Jamison in \cite{BJb}, it is not known that whether algebra $*$-homomorphisms 
on $\lxm$ are bounded with $\|\cdot\|_{L}$ or not. Therefore there are some results for continuous algebra $*$-homomorphisms on $\lxm$. For example, a result of Botelho and Jamison \cite{BJb} says that if $T: \Lip(X_1, B(H)) \to \Lip(X_2, B(H))$ is a ps-continuous algebra $*$-homomorphism that preserves constant functions with $T(1)=1$, then there exist a Lipschitz function $\varphi: X_2 \to X_1$ and an algebra $*$-homomorphism $V: B(H) \to B(H)$ such that $T(F)(y)=V(F(\varphi(y)))$ for all $F \in \Lip(X_1, B(H))$ and $y \in X_2$. 
In \cite{BJm}, Botelho and Jamison also proved that if $T: \Lip(X_1, M_{n}(\mathbb{C})) \to \Lip(X_2, M_{n}(\mathbb{C}))$ is an algebra $*$-homomorphism and $T(1)=1$ then there exist an algebra $*$-homomorphism $V_{y}: M_{n}(\mathbb{C}) \to M_{n}(\mathbb{C})$ for each $y \in X_2$ and a continuous function $\varphi:X_2 \to X_1$ such that $T(F)(y)=V_y(F(\varphi(y)))$ for all $F \in \Lip(X_1, M_{n}(\mathbb{C}))$ and $y \in X_2$. As in the case of commutative $C^{*}$-algebras, in \cite{Oi} the author proved the following. Suppose that $X_2$ is connected. Let $M_i$ be compact Hausdorff spaces for $i=1,2$.  If $T: \Lip(X_1, C(M_1)) \to \Lip(X_2, C(M_2))$ is an algebra homomorphism with $T(1)=1$ then there exist a continuous map $\tau: M_2 \to M_1$, and a Lipschitz map $\varphi_{\phi}:X_2 \to X_1$ for each $\phi \in M_2$ 
 such that $T(F)(y)(\phi)=F(\varphi_{\phi}(y))(\tau(\phi))$ holds for any $F \in \Lip(X_1, C(M_1))$, $\phi \in M_2$ and $y \in X_2$. 
The above results tell us that there are a variety of forms of algebra $*$-homomorphisms.  We  see that  forms of  algebra $*$-homomorphisms and Jordan $*$-homomorphisms depend on the classes of $C^{*}$-algebras $\ma$. In Section \ref{LC}, we consider Jordan $*$-homomorphisms on $\lxm$ for any unital $C^{*}$-algebras $\ma$. We obtain a characterization of the Jordan $*$-isomorphisms from $\lxom$ onto $\lxtm$ when $\ma_1$ and $\ma_2$ are primitive $C^{*}$-algebras. This is an generalization of the above results by Botelho and Jamison in \cite{BJb,BJm}. 

To state our theorem, we need some notations. We say a state $\rho$ on $\ma$ is pure if it has the property that whenever $\tau$ is a positive linear functional on $\ma$ such that $\tau \le \rho$, there is a number $t \in [0,1]$ such that $\tau=t\rho$. We denote the set of all pure states on $\ma$ by $\ps(A)$.  
If $\rho$ is a state on $\ma$, then there is a cyclic representation $(\pi_{\rho}, H_{\rho})$ of $\ma$ with a unit cyclic vector $x_{\rho}$ such that $\rho(a)= <\pi_{\rho}(a)x_{\rho}, x_{\rho}>$ for all $a \in \ma$.
In this paper,  we denote the identity operator on  a set $B$ by $\id_{B}$. 
 We also denote the unity of a unital Banach algebra $B$ by $1_{B}$.  If there is no cause for confusion, we write just $1$.

\section{Jordan $*$-homomorphisms on $C(X,\ma)$}\label{RC}

In this section we state the theorems for Jordan $*$-homomorphisms on $C(X,\ma)$. In general, every Jordan $*$-homomorphism between two unital $C^{*}$-algebras is contractive. More 
precisely, let $\ma_1$ and $\ma_2$ be unital $C^{*}$-algebras.  Suppose that  $J: \ma_1 \to \ma_2$ is a Jordan $*$-homomorphism.  Since $J(1)$ is a projection in $\ma_2$, we have  $\|J\| \le 1$ by \cite[Corollary 1]{RD}.

We first introduce some additional notations and terminology. 
In this section, $X$,  $X_1$ and $X_2$ are compact Hausdorff spaces. 
For a unital $C^{*}$-algebra $\ma$, if its zero ideal is primitive, we call it a primitive $C^{*}$-algebra. 
Let $f \in C(X)$ and $a \in \ma$, where $C(X)$ is a commutative $C^{*}$-algebra of all complex valued continuous functions on $X$. We define the algebraic tensor product $f \otimes a: X \to \ma$ by
\[
(f\otimes a)(x)=f(x)a, \quad x \in X.
\]
We have $f \otimes a \in C(X,\ma)$ such that $\|f \otimes a\|=\|f\|_{\infty}\|a\|_{\ma}$. We denote the algebraic tensor product space by $C(X) \otimes \ma$.

For any unital $C^{*}$-algebra $\ma$, the following proposition is well-known. 
\begin{prop}\label{Ci}
If  $\mathcal{B}$ is a $C^{*}$-subalgebra of $\ma$ that contains the identity, every pure state on $\mathcal{B}$ is extend to a pure state on $\ma$.
\end{prop}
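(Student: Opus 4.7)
The plan is to use the standard combination of Hahn--Banach extension with the Krein--Milman theorem applied to the compact convex set of state extensions.

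First I would fix a pure state $\rho$ on $\mathcal{B}$ and produce at least one state extension to $\mathcal{A}$. By the Hahn--Banach theorem extend $\rho$ to a bounded linear functional $\widetilde{\rho}$ on $\mathcal{A}$ with $\|\widetilde{\rho}\|=\|\rho\|=1$. Since $1\in \mathcal{B}$ and $\widetilde{\rho}(1)=\rho(1)=1=\|\widetilde{\rho}\|$, the standard characterization of states (a norm-one functional on a unital $C^{*}$-algebra with value $1$ at the unit is positive) forces $\widetilde{\rho}$ to be a state on $\mathcal{A}$.

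Next I would consider the set
\[
S=\{\tau \in \mathcal{A}^{*} : \tau \text{ is a state on } \mathcal{A} \text{ with } \tau|_{\mathcal{B}}=\rho\}.
\]
This set is nonempty by the previous step, is clearly convex, and is weak-$*$ closed in the state space $S(\mathcal{A})$, which is weak-$*$ compact by the Banach--Alaoglu theorem. Hence $S$ is a nonempty, convex, weak-$*$ compact set, so by the Krein--Milman theorem it has an extreme point, say $\tau_0$.

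Finally I would verify that $\tau_0$ is in fact pure on $\mathcal{A}$, i.e.\ an extreme point of $S(\mathcal{A})$. Suppose $\tau_0=\tfrac{1}{2}(\sigma_1+\sigma_2)$ for some states $\sigma_1,\sigma_2$ on $\mathcal{A}$. Restricting to $\mathcal{B}$ yields $\rho=\tfrac{1}{2}(\sigma_1|_{\mathcal{B}}+\sigma_2|_{\mathcal{B}})$; since $\rho$ is a pure state on $\mathcal{B}$ and $\sigma_i|_{\mathcal{B}}$ are states on $\mathcal{B}$, we obtain $\sigma_1|_{\mathcal{B}}=\sigma_2|_{\mathcal{B}}=\rho$, so both $\sigma_i$ belong to $S$. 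Extremality of $\tau_0$ in $S$ then forces $\sigma_1=\sigma_2=\tau_0$, proving $\tau_0 \in \operatorname{PS}(\mathcal{A})$. The only subtle point is the transition from ``extreme in $S$'' to ``extreme in $S(\mathcal{A})$,'' which is handled precisely by using that $\rho$ is itself pure on $\mathcal{B}$ so that any convex decomposition of $\tau_0$ in $S(\mathcal{A})$ automatically lies inside $S$; once this is observed the argument is routine.
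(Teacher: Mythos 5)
Your proof is correct. Note that the paper does not prove this proposition at all: it is stated as well known and delegated to \cite[Corollary 32.8]{Con}, and the argument you give (Hahn--Banach extension of $\rho$ to a norm-one functional, which is a state because $\widetilde{\rho}(1)=1=\|\widetilde{\rho}\|$, followed by Krein--Milman applied to the weak-$*$ compact convex set $S$ of state extensions, with purity of $\rho$ making $S$ a face of $S(\mathcal{A})$) is exactly the standard proof of that cited result. The only detail worth flagging is that the paper defines purity order-theoretically (every positive $\tau\le\rho$ is a multiple $t\rho$), while you use the extreme-point characterization of pure states both for $\rho$ on $\mathcal{B}$ and for the conclusion on $\mathcal{A}$; since both algebras here are unital and $1\in\mathcal{B}$ (so that the restrictions $\sigma_i|_{\mathcal{B}}$ are genuinely states), this equivalence is standard and your use of it is harmless, though a one-line remark acknowledging it would make the argument self-contained relative to the paper's definition.
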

We see a proof of Proposition \ref{Ci} e.g. in \cite[Corollary 32.8]{Con}. On the other hand, 
we also have the next proposition. 
\begin{prop}\label{NC}
If $\Bb$ is a  $C^{*}$-subalgebra of $\ma$, every pure state on $\mathcal{B}$ can be extend to a pure state on $\ma$.
\end{prop}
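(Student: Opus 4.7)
The plan is to reduce Proposition~\ref{NC} to Proposition~\ref{Ci}. Given a pure state $\rho$ on $\Bb$, set $\Bbt := \Bb + \C \cdot 1_{\ma}$; this is a unital $C^{*}$-subalgebra of $\ma$ containing $1_\ma$. The strategy is to first extend $\rho$ to a pure state $\tilde{\rho}$ on $\Bbt$, and then invoke Proposition~\ref{Ci} applied to the inclusion $\Bbt \subseteq \ma$ to obtain the desired pure state extension to $\ma$.

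If $1_\ma \in \Bb$ then $\Bbt = \Bb$ and Proposition~\ref{Ci} applies at once, so I may assume $1_\ma \notin \Bb$ and split into two subcases. If $\Bb$ has its own unit $e \ne 1_\ma$, then $1_\ma - e$ is a nonzero projection in $\ma$ orthogonal to $\Bb$, and $\Bbt$ decomposes as the $C^{*}$-direct sum $\Bb \oplus \C(1_\ma - e)$. I would define $\tilde{\rho}(b + \lambda(1_\ma - e)) := \rho(b)$; this is plainly a state on $\Bbt$, and for purity any positive functional $\tau \le \tilde{\rho}$ must vanish on $\C(1_\ma - e)$ and satisfy $\tau|_\Bb \le \rho$, so purity of $\rho$ forces $\tau|_\Bb = t\rho$ and hence $\tau = t\tilde{\rho}$. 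If instead $\Bb$ is non-unital, then $\Bbt$ is (isomorphic to) the minimal unitization of $\Bb$, and I would take the canonical extension $\tilde{\rho}(b + \lambda 1_\ma) := \rho(b) + \lambda$.

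The main obstacle is verifying that $\tilde{\rho}$ is pure in the non-unital subcase. My preferred route is via the GNS construction: the GNS triple $(\pi_{\rho}, H_{\rho}, x_{\rho})$ of $\rho$ extends uniquely to a $*$-representation $\widetilde{\pi}$ of $\Bbt$ on $H_{\rho}$ by $\widetilde{\pi}(b + \lambda 1_\ma) := \pi_{\rho}(b) + \lambda \id_{H_{\rho}}$, and $x_{\rho}$ remains a unit cyclic vector for $\widetilde{\pi}$ that implements $\tilde{\rho}$. Since adjoining scalar multiples of the identity to the range of $\pi_{\rho}$ creates no new closed invariant subspaces, $\widetilde{\pi}$ inherits irreducibility from $\pi_{\rho}$, so $\rho$ being pure forces $\tilde{\rho}$ to be pure. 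A final application of Proposition~\ref{Ci} to $\Bbt \subseteq \ma$ then yields a pure state on $\ma$ extending $\tilde{\rho}$, and a fortiori extending $\rho$.
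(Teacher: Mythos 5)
Your proof is correct, but it carries out the reduction to Proposition~\ref{Ci} along a genuinely different route than the paper. You adjoin the unit of $\ma$ itself, working with $\Bb+\C 1_{\ma}\subseteq\ma$, so a single application of Proposition~\ref{Ci} finishes the argument; the paper instead passes to the abstract unitization $\Bbt$ of $\Bb$ viewed inside the unitization $\widetilde{\ma}$ of the (already unital) algebra $\ma$, applies Proposition~\ref{Ci} there, and then must run an additional argument showing that the restriction to $\ma$ of the resulting pure state on $\widetilde{\ma}$ is still pure --- a step your set-up avoids entirely. The second difference is how purity of the intermediate extension is established: the paper does it by hand, taking a positive functional $\tau\le\psi_1$, using an approximate unit $(u_{\la})$ of $\Bb$ and a quadratic inequality in a real parameter to force $\|\tau\|=t$, whereas you invoke the standard equivalence between purity of a state and irreducibility of its GNS representation (plus the observation that adjoining scalar multiples of the identity does not change the invariant subspaces), together with a direct orthogonal-sum argument $\Bb\oplus\C(1_{\ma}-e)$ when $\Bb$ has its own unit $e\neq 1_{\ma}$; the paper absorbs that subcase into the same unitization computation, and in fact in both of your subcases the extension is the same functional $b+\lambda 1_{\ma}\mapsto\rho(b)+\lambda$, so your GNS argument alone would have covered both. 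What each buys: your route is shorter and structurally cleaner, at the cost of citing the GNS characterization of pure states --- machinery the paper uses anyway, e.g.\ in the proof of Theorem~\ref{T1} --- while the paper's argument is more elementary and self-contained, relying only on the order-theoretic definition of purity and approximate units.
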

Proposition \ref{NC} means that  even if $\mathcal{B}$ does not contain the identity of $\ma$, each pure state on $\mathcal{B}$ can be extend.
Unfortunately we could not find a proof of Proposition \ref{NC} in the literature, so we present proofs in detail for completeness. To prove Proposition \ref{NC}, we apply Proposition \ref{Ci}. 

\begin{proof}
Let $\psi$ be a pure state on $\Bb$. If $\Bb$ has the identity of $\ma$, by Proposition \ref{Ci} there exists a pure state $\phi$ on $\ma$ such that $\phi|_{\Bb}= \psi$. Thus suppose that $\Bb$ does not have the identity. Let $\Bbt$ be the unitization of $\Bb$. We define a linear functional $\psi_1$ on $\Bbt$ by $\psi_1(b+ \beta)=\psi(b)+\beta$ for  $b \in \Bb$, $\beta \in \mathbb{C}$. Then for any positive element $(b+\beta)^{*}(b+\beta) \in \Bbt$, we have $\psi_1((b+\beta)^{*}(b+\beta)) \ge (|\psi(b)|-|\beta|)^2+2(|\overline{\psi(b)}\beta|+ Re(\overline{\psi(b)}\beta)) \ge 0$ and $\psi_{1}(1)=1$. Thus $\psi_1$ is a state on $\Bbt$. Let $\tau$ be a positive linear functional on $\Bbt$  such that $\tau \le \psi_1$. For any $b \in \Bb$, we get $\tau(b^{*}b) \le \psi_1(b^{*}b) = \psi(b^{*}b)$. Thus $\tau|_{\Bb}$ is a positive linear functional on $\Bb$ such that $\tau|_{\Bb} \le \psi$. Since $\psi$ is a pure state on $\Bb$, there is $t \in [0,1]$ such that $\tau|_{\Bb} =t \psi$. Let $(u_{\la})_{\la \in \Lambda}$ be an approximate unit for $\Bb$. As $\psi(u_{\la})^2 \le \psi(u_{\la}^2) \le 1$, and $\lim_{\la} \psi(u_{\la})=\|\psi\|=1$, we get $\lim_{\la}\psi(u_{\la}^{2})=1$. As $\tau \le \psi_1$, for any $\beta \in \mathbb{R}$ we have 
\begin{multline*}
0 \le (\psi_1-\tau)((u_{\la}+\beta)^{*}(u_{\la}+\beta))=(\psi_1-\tau)(u_{\la}^{2}+2\beta u_{\la}+\beta^{2})\\
=(1-t)(\psi(u_{\la}^{2}))+(1-t)(2\psi(u_{\la})\beta)+(1-\|\tau\|)\beta^{2}.
\end{multline*} 
Thus $0 \le (1-t)+2(1-t)\beta+(1-||\tau\|)\beta^{2}$ for any $\beta \in \mathbb{R}$. This implies that $(1-t)^2-(1-t)(1-\|\tau\|)=(1-t)(\|\tau\|-t) \le 0$. As $t \in [0,1]$, we obtain $\|\tau\|\le t$. 
For any $b \in \Bb$, we have $\|t\psi(b)\|=\|\tau(b+0)\|\le \|\tau\| \|b\|$. As $\|\psi\|=1$ we get $t \le \|\tau\|$. Thus $\|\tau\|=t$. Then for any $b \in \Bb$ and $\beta \in \mathbb{C}$, we get  $\tau(b+ \beta)=\tau(b)+\beta\tau(1)=t\psi(b)+\beta t=t(\psi(b)+\beta)=t\psi_1(b+\beta)$. Hence $\tau=t \psi_1$ and $\psi_1$ is a pure state on $\Bbt$. Since $\Bbt$ contains the identity of $\widetilde{\ma}$, by Proposition \ref{Ci}, there is a pure state $\psi_1'$ on  $\widetilde{\ma}$ such that $\psi_1'|_{\Bbt}=\psi_1$. We shall show that $\psi_{1}'|_{\ma}$ is a pure state on $\ma$. Let $\tau$ be a positive linear functional such that $\tau \le \psi_1'|_{\ma}$. Then we define a positive linear functional $\tau'$ on  $\widetilde{\ma}$ by $\tau'(a+ \beta)=\tau(a)+\beta \|\tau\|$ for any $a+\beta \in  \widetilde{\ma}$. Then $\|\tau'\|=\|\tau\|$. For any positive element $(a+\beta)^*(a+ \beta) \in  \widetilde{\ma}$, we have $(\psi_1'-\tau')((a+\beta)^*(a+ \beta))=(\psi_1'|_{\ma}-\tau)((a+\beta 1_{\ma})^*(a+ \beta1_{\ma})) \ge 0$. Hence $\psi_1' \ge \tau'$. Since $\psi_1'$ is a pure state on $\widetilde{\ma}$, there exists $t \in [0,1]$ such that $\tau'=t \psi_1'$. For any $a \in \ma$ we have $\tau(a)=\tau'(a)=t\psi_1'(a)=t\psi_1'|_{\ma}(a)$. Thus $\psi_1'|_{\ma}$ is a pure state on $\ma$. We get $\psi(b)=\psi_1(b)=\psi_1'|_{\Bbt}(b)=\psi_1'|_{\ma}(b)$ for any $b \in \Bb$. Therefore $\psi_1'|_{\ma}=\psi$ on $\Bb$.
\end{proof}

For $y \in X$, let $\delta_{y}$ denote the evaluation function on $C(X, \ma)$ defined by $\delta_{y}(F)=F(y)$. We now describe the pure states on a $C^{*}$-subalgebra of $C(X, \ma)$. 

\begin{prop}\label{333}
Let $\mathcal{B}$ be a $C^{*}$-subalgebra of $C(X, \ma)$. Then $\ps(\mathcal{B}) \subset \{ \rdy |\  \rho \in \ps(\ma), y \in X\}$.
\end{prop}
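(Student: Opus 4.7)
The plan is to reduce to the case $\mathcal{B}=C(X,\ma)$ and then run a GNS argument that exploits the central subalgebra $C(X)\otimes 1_\ma$ of $C(X,\ma)$. First, given $\psi\in\ps(\mathcal{B})$, apply Proposition \ref{NC} with the ambient algebra $C(X,\ma)$ in place of $\ma$ to produce a pure state $\Psi\in\ps(C(X,\ma))$ extending $\psi$. It therefore suffices to show that every $\Psi\in\ps(C(X,\ma))$ has the form $\rho\circ\delta_y$ for some $\rho\in\ps(\ma)$ and $y\in X$, since restricting back to $\mathcal{B}$ then yields the claim.

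Next, let $(\pi_\Psi,H_\Psi,\xi_\Psi)$ be the GNS triple of $\Psi$. Purity of $\Psi$ makes $\pi_\Psi$ irreducible, so $\pi_\Psi(C(X,\ma))'=\mathbb{C}I$. Each element $f\otimes 1_\ma$ with $f\in C(X)$ lies in the center of $C(X,\ma)$, hence $\pi_\Psi(f\otimes 1_\ma)$ lies in the commutant and is a scalar. The resulting unital $*$-homomorphism $C(X)\to\mathbb{C}$ is evaluation at some $y\in X$, giving
\[
\pi_\Psi(f\otimes 1_\ma)=f(y)I \quad \text{and}\quad \Psi(f\otimes 1_\ma)=f(y) \qquad (f\in C(X)).
\]

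Let $J_y=\{F\in C(X,\ma):F(y)=0\}$; this is a closed two-sided ideal and $\delta_y$ induces $C(X,\ma)/J_y\cong\ma$. For $F\in J_y$, set $g(x)=\|F(x)\|_\ma^{2}$, so $g\in C(X)$ and $g(y)=0$. Pointwise $g(x)1_\ma-F(x)^{*}F(x)\ge 0$ in $\ma$, and pointwise positivity is positivity in the $C^{*}$-algebra $C(X,\ma)$, so $g\otimes 1_\ma-F^{*}F\ge 0$. Applying $\Psi$ yields
\[
0\le \Psi(F^{*}F)\le \Psi(g\otimes 1_\ma)=g(y)=0,
\]
and Cauchy--Schwarz for states gives $|\Psi(F)|^{2}\le \Psi(1)\Psi(F^{*}F)=0$. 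Thus $\Psi$ annihilates $J_y$, so it factors as $\Psi=\rho\circ\delta_y$ for some state $\rho$ on $\ma$.

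Finally, $\rho$ is pure: any convex decomposition $\rho=t\rho_1+(1-t)\rho_2$ into states pulls back to $\Psi=t(\rho_1\circ\delta_y)+(1-t)(\rho_2\circ\delta_y)$, and purity of $\Psi$ together with surjectivity of $\delta_y$ forces $\rho_1=\rho_2=\rho$. Restricting to $\mathcal{B}$ gives $\psi=(\rho\circ\delta_y)|_{\mathcal{B}}$, as desired. The one substantive step is the domination $F^{*}F\le g\otimes 1_\ma$ of step three (a routine pointwise check, once one remembers that positivity in $C(X,\ma)$ is pointwise); the remainder is standard GNS bookkeeping combined with the extension proposition just proved.
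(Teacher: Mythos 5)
Your proof is correct, but it takes a genuinely different route from the paper's after the common first step. Both arguments begin identically, by invoking Proposition \ref{NC} to extend the given pure state on $\mathcal{B}$ to a pure state $\Psi$ on $C(X,\ma)$. At that point the paper observes that $\Psi$, being a pure state, is an extreme point of the closed unit ball of $C(X,\ma)^{*}$ and then cites a known Banach-space result (Corollary 2.3.6 of Fleming--Jamison) describing the extreme points of the dual ball of a space of vector-valued continuous functions as $\rho\circ\delta_y$ with $\rho$ extreme in the dual ball of $\ma$; purity of $\rho$ then falls out of $\rho(1)=1$. You instead give a self-contained $C^{*}$-algebraic argument: irreducibility of the GNS representation forces the central copy $C(X)\otimes 1_{\ma}$ into the scalars, producing a character of $C(X)$ and hence a point $y$; the estimate $F^{*}F\le g\otimes 1_{\ma}$ with $g(x)=\|F(x)\|^{2}$ together with Cauchy--Schwarz shows $\Psi$ annihilates the kernel of the surjective $*$-homomorphism $\delta_y$, so $\Psi$ factors through $\ma$ as $\rho\circ\delta_y$, and purity of $\rho$ follows from purity of $\Psi$ plus surjectivity of $\delta_y$. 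All the steps you use are sound (pointwise positivity does coincide with positivity in $C(X,\ma)$, and the GNS representation of the unital algebra is unital, so the induced character is nonzero). What the paper's route buys is brevity, outsourcing the structural step to a ready-made extreme-point theorem; what yours buys is independence from that reference and a transparent explanation of where $y$ comes from, namely that a pure state must restrict to a character on the center $C(X)\otimes 1_{\ma}$.
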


\begin{proof}
Let $p \in \ps(\mathcal{B})$. By Proposition \ref{NC}, there exists a pure state $\phi$ on $C(X,\ma)$ such that $\phi|_{\mathcal{B}}=p$.  Suppose that there are $\tau_1, \tau_2$ in $C(X,\ma)^{*}_1$, which is the closed unit ball of the dual of $C(X, \ma)$, such that $\phi=\frac{\tau_1+\tau_2}{2}$. Then we have $1=\phi(1)=\frac{\tau_1(1)+\tau_2(1)}{2}$, we have $\tau_1(1)=\tau_2(1)=1$. Thus $\tau_1$ and $\tau_2$ are states on $C(X, \ma)$. As $\phi$ is a pure state, we get $\phi=\tau_1=\tau_2$. Therefore $\phi$ is an extreme point of $C(X, \ma)^{*}_1$. 
By Corollary 2.3.6. in \cite{FJB}, there exist an extreme point $\rho$ of the closed unit ball of the dual of $\ma$ and $y \in X$ such that $\phi=\rho \circ \delta_y$. As $1=\phi(1)=\rho \circ \delta_y(1)=\rho(1)$, $\rho$ is a pure state on $\ma$.
\end{proof}

\begin{theorem}\label{T1}
Let $J$ be a Jordan $*$-homomorphim of $\coa$ into $\cta$. Let $\mathcal{B}$ be the $C^{*}$-algebra generated by $J(\coa)$. For any $\rdy \in \ps(\mathcal{B})$, let $Y_{\rho}=\{ x \in X_2 | \  \rho \circ \delta_{x} \in  \ps(\mathcal{B})\}$. Then there exist a continuous map $\varphi_{\rho}: \yr \to X_1$ and a Jordan $*$-homomorphism $V_y : \ma_1 \to \ma_2$ for each $y \in \yr$ such that 
\[
\pi_{\rho}(JF(y))=\pi_{\rho}(V_y(F(\varphi_{\rho}(y))))
\]
for all $F \in \coa$ and all $y \in \yr$.
\end{theorem}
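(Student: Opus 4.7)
The plan is to pass to the GNS representation $(\pi_{\rho}, H_{\rho}, x_{\rho})$ and to analyze the composition $\Phi_y := \pi_{\rho} \circ \delta_y \circ J \colon \coa \to B(H_{\rho})$, which is a Jordan $*$-homomorphism whose image lies in the $*$-subalgebra $\pi_{\rho}(\delta_y(\mathcal{B}))$. Since $\rdy$ is pure on $\mathcal{B}$ by hypothesis, the cyclic subspace $H' := \overline{\pi_{\rho}(\delta_y(\mathcal{B})) x_{\rho}}$ is invariant under this self-adjoint subalgebra, and the compression of $\pi_{\rho}\circ\delta_y|_{\mathcal{B}}$ to $H'$ realizes (up to unitary equivalence) the irreducible GNS representation of $\mathcal{B}$ associated with the pure state $\rdy$. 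Consequently $\tilde\Phi_y := \Phi_y|_{H'}$ is a Jordan $*$-homomorphism from $\coa$ into $B(H')$ whose image generates a $*$-subalgebra acting irreducibly on $H'$; note also that $\tilde\Phi_y(1)=I_{H'}$ since $J(1)$ is the unit of $\mathcal{B}$ and $\rho(\delta_y(J(1)))=1$.

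Next, I would invoke St\o rmer's structure theorem for Jordan $*$-homomorphisms: $\tilde\Phi_y$ splits as $\tilde\Phi_y^h + \tilde\Phi_y^a$, a $*$-homomorphism plus a $*$-anti-homomorphism whose products in both orders vanish. Because the images jointly generate an algebra acting irreducibly on $H'$, and $B(H')$ is a factor, this orthogonality forces one of the two summands to vanish, so $\tilde\Phi_y$ is either a $*$-homomorphism or a $*$-anti-homomorphism; both cases are handled symmetrically, and I assume the $*$-homomorphism case. Restricting to the central subalgebra $C(X_1)\otimes 1 \subset \coa$, its image under $\tilde\Phi_y$ commutes with $\tilde\Phi_y(1 \otimes \ma_1)$ and therefore, by density of $C(X_1)\otimes \ma_1$ in $\coa$, with all of $\tilde\Phi_y(\coa)$. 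Schur's lemma on the irreducible subspace $H'$ then forces $\tilde\Phi_y(f\otimes 1) \in \mathbb{C}\cdot I_{H'}$, giving a unital character $\chi$ on $C(X_1)$ with $\tilde\Phi_y(f\otimes 1) = \chi(f)\, I_{H'}$; by Gelfand--Naimark, $\chi(f) = f(\varphi_{\rho}(y))$ for a unique $\varphi_{\rho}(y) \in X_1$.

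Finally, I would set $V_y(a) := J(1\otimes a)(y)$; this is a Jordan $*$-homomorphism $\ma_1 \to \ma_2$ as the composition of $J$ with the $*$-homomorphisms $a\mapsto 1\otimes a$ and $\delta_y$. For $F = f\otimes a$, the multiplicativity of $\tilde\Phi_y$ combined with the central character identity gives
\[
\tilde\Phi_y(f\otimes a) = \chi(f)\,\tilde\Phi_y(1\otimes a) = \tilde\Phi_y\bigl(1\otimes F(\varphi_{\rho}(y))\bigr),
\]
which, upon unwinding $\Phi_y$, becomes the stated formula $\pi_{\rho}(JF(y)) = \pi_{\rho}(V_y(F(\varphi_{\rho}(y))))$. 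Linearity and the norm continuity of $\Phi_y$ then extend the identity to arbitrary $F \in \coa$. Continuity of $\varphi_{\rho}$ on $\yr$ follows from the identity $f(\varphi_{\rho}(y)) = \rho(\delta_y(J(f\otimes 1)))$ for $f \in C(X_1)$, since the right-hand side is continuous in $y$ through the continuous function $J(f\otimes 1) \in \cta$ and continuous pure state $\rho$.

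The main obstacle is the application of the St\o rmer-type reduction on the compression $\tilde\Phi_y$: one must justify that the two orthogonal summands of a Jordan $*$-homomorphism into $B(H')$ live in complementary central projections of the enveloping von Neumann algebra, so that the factor property of $B(H')$ forces one summand to vanish, leaving a genuine $*$-homomorphism or $*$-anti-homomorphism. A secondary point is verifying the $*$-anti-homomorphism case, where the identical computation yields the same formula because $V_y$ is a Jordan $*$-homomorphism regardless of the type of $\tilde\Phi_y$, and $\chi(f)$ commutes with every operator in the image.
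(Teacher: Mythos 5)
Your overall route is the one the paper itself takes: St\o rmer's dichotomy applied to a Jordan $*$-homomorphism whose image acts irreducibly in the GNS representation attached to the pure state $\rdy|_{\Bb}$, Schur's lemma producing a character of $C(X_1)\otimes 1$ and hence the point $\varphi_{\rho}(y)$, the weight $V_y(a)=J(1\otimes a)(y)$, and a density argument on $C(X_1)\otimes\ma_1$, followed by essentially the same continuity argument for $\varphi_{\rho}$. Realizing the GNS representation of $\Bb$ as the compression of $\pi_{\rho}\circ\delta_y$ to $H'=\overline{\pi_{\rho}(\delta_y(\Bb))x_{\rho}}$ is only a cosmetic difference, and the ``main obstacle'' you worry about is not one: the hom-or-antihom reduction when the weak closure of the image is a factor is exactly \cite[Corollary 3.4]{Stormer}, which the paper simply cites.

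The genuine gap is the phrase ``which, upon unwinding $\Phi_y$, becomes the stated formula.'' What your computation proves is that the restriction of $\pi_{\rho}\bigl(JF(y)-V_y(F(\varphi_{\rho}(y)))\bigr)$ to $H'$ vanishes. These operators lie in $\pi_{\rho}(\Bb_y)$, where $\Bb_y=\delta_y(\Bb)$, and $H'$ is a reducing subspace for $\pi_{\rho}(\Bb_y)$ that can be a proper subspace of $H_{\rho}$ (nothing forces $x_{\rho}$ to be cyclic for $\pi_{\rho}(\Bb_y)$); vanishing of the compression therefore does not by itself give the asserted identity in $B(H_{\rho})$. The paper devotes a separate block of argument to precisely this transfer, which you omit entirely: it proves that $\rho$ restricts to a pure state on $\Bb_y$, applies St\o rmer a second time to the Jordan $*$-homomorphism $F\mapsto\pi_{\rho}(JF(y))$ so that multiplicativity (or anti-multiplicativity) and the relation $\pi_{\rho}(J(f\otimes 1)(y))=f(\varphi_{\rho}(y))\pi_{\rho}(J(1)(y))$ are available at the level of $\pi_{\rho}$ itself rather than only on the compression, and only then runs the elementary-tensor computation and the density argument. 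You need to supply this step (or some other justification for passing from $H'$ to $H_{\rho}$); as written, your argument yields the conclusion only after compressing to $H'$.
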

We note that as $\rdy \in \ps(\mathcal{B})$, $y \in Y_{\rho} \neq \emptyset$.

\begin{proof}
By Proposition \ref{333}, fix $\rdy \in \ps(\mathcal{B})$. Then $\pi_{\rdy}$ is an irreducible representation of $\Bb$. Thus $\pi_{\rdy}(\Bb)$ acts irreducibly on $H_{\rdy}$. Then $\pi_{\rdy}(\Bb)'=\mathbb{C}\id_{H_{\rdy}}$, where $\pi_{\rdy}(\Bb)'$ is the commutant of $\pi_{\rdy}(\Bb)$ in $B(H_{\rdy})$. By Corollary 3.4. in \cite{Stormer}, $\pi_{\rdy} \circ J:\coa \to B(H_{\rdy})$ is either a $*$-homomorphism or an anti $*$-homomorphism. For any $f \in C(X_1)$, if it is a homomorphism, we have 
\begin{equation*}
\begin{split}
\pi_{\rdy}(J(F))\pi_{\rdy}(J(f \otimes 1))=\pi_{\rdy}(J(F\cdot f\otimes 1))\\=\pi_{\rdy}(J(f \otimes 1 \cdot F))=\pi_{\rdy}(J(f \otimes 1))\pi_{\rdy}(JF)
\end{split}
\end{equation*}
for all $F \in \coa$. Thus $J(f \otimes 1) \in \pi_{\rdy}(\Bb)'=\mathbb{C}\id_{H_{\rdy}}$. If $\pi_{\rdy} \circ J$ is an anti $*$-homomorphism, we obtain $J(f \otimes 1) \in \pi_{\rdy}(\Bb)'=\mathbb{C}\id_{H_{\rdy}}$ similarly. There is $\lambda_{f} \in \mathbb{C}$ such that 
\begin{equation}\label{1}
\pi_{\rdy} \circ J(f \otimes 1)=\lambda_{f} \cdot \id_{H_{\rdy}}.
\end{equation}
If  $f=1 \in C(X_1)$, there is $\la_1 \in \mathbb{C}$ such that $\pi_{\rdy} \circ J(1)=\lambda_{1} \cdot \id_{H_{\rdy}}$. We have $\lambda_{1} \cdot \id_{H_{\rdy}}=\pi_{\rdy} \circ J(1)=\pi_{\rdy} \circ J(1^2)=(\pi_{\rdy} \circ J(1))(\pi_{\rdy} \circ J(1))=\lambda_{1}^2 \cdot \id_{H_{\rdy}}$. This implies that $\lambda_{1}=1$ or $\lambda_{1}$=0. Suppose that  $\la_1=0$. Then we get $\pi_{\rdy}(J(1))=0$. As $\rdy$ is a pure state on $\Bb$, there exists $F \in \coa$ such that $\rdy(JF) \neq 0$. Therefore $\pi_{\rdy}(JF) \neq 0$. Then we have $0 \neq \pi_{\rdy}(JF)=\pi_{\rdy}(J(F \cdot 1))=\pi_{\rdy}(JF) \pi_{\rdy}(J1)=0$. This is a contradiction. Hence $\la_1 =1$. Then $\pi_{\rdy} \circ J(1)=\id_{H_{\rdy}}$ and by (\ref{1}) we get  $\pi_{\rdy} \circ J(f \otimes 1)=\lambda_{f} \cdot \id_{H_{\rdy}}=\pi_{\rdy} \circ J(\lambda_{f} \cdot 1)$. So we get $\pi_{\rdy}(J(f\otimes 1)-J(\lambda_f 1))=0$. 
This implies $\rdy((J(f\otimes 1)-J(\lambda_f  1))^*(J(f\otimes 1)-J(\lambda_f  1)))=0$. Therefore we get
\begin{equation}\label{1.5}
J^{*}(\rdy)(f \otimes 1-\lambda_f \cdot 1)=0
\end{equation} 
and 
\begin{equation}\label{2}
\pi_{\rho}(J(f\otimes 1)(y))=\lambda_{f}(\pi_{\rho}(J(1)(y)))
\end{equation}
 for any $f \in C(X_1)$.  
Since $\rdy \in \ps(\Bb)$, we have $J^{*}(\rdy) \in \ps(\coa)$. There exist $\phi_{\rho,y} \in \ps(\ma_1)$ and $x \in X_1$ such that $J^{*}(\rdy)=\phi_{\rho,y} \circ \delta_{x}$. 
Now we define $\varphi_{\rho}:Y_{\rho} \to X_1$ by $J^{*}(\rdy)=\phi_{\rho,y} \circ \delta_{\varphi_{\rho}(y)}$. Then by (\ref{1.5}) and $\phi_{\rho,y} (1)=1$, we obtain 
$f(\varphi_{\rho}(y))=\lambda_f$. By (\ref{2}), we have
\begin{equation}\label{3}
\pi_{\rho}(J(f \otimes 1)(y))=f(\varphi_{\rho}(y))\pi_{\rho}(J(1)(y))
\end{equation} 
for any $f \in C(X_1)$ and $y \in Y_{\rho}$. 

Fix $y \in Y_{\rho}$. We define $V_{y}: \ma_1 \to \ma_2$ by 
\[
V_y(a)=J(1 \otimes a)(y)
\]
for any $a \in \ma_1$. It is clear that $V_y$ is a Jordan $*$-homomorphism.
Let $\Bb_y$ be the $C^{*}$-algebra generated by $J(\coa)(y)$. As $\rdy \in \ps(\Bb)$, we shall show that $\rho$ is a pure state on $\Bb_y$. As $\rdy \in \ps(\Bb)$, for any $\epsilon>0$, there is $G \in \Bb$ with $\|G\|_{\infty} \le 1$ such that $|\rdy(G)| \ge 1-\epsilon$. This implies that $|\rho(G(y))| \ge 1-\epsilon$. Since $\|G(y)\| \le \|G\|_{\infty} \le 1$, we get $\|\rho|_{\Bb_y}\|\ge 1$. On the other hand, as $\rho \in \ps(\ma_2)$, $\rho$ is a state on $\Bb_y$. Let $\tau$ be a positive linear functional such that $\tau \le \rho$ on $\Bb_y$. Then for any positive element $F \in \coa$, $\tau(JF(y)) \le \rho(JF(y))$. This implies that $\tau \circ \delta_y \le \rdy$ on $\Bb$. Since $\rdy$ is a pure state on $\Bb$, there is  $t \in [0,1]$ such that $\tau \circ \delta_y = t \rho \circ \delta_y$. For any $F \in \coa$, we get $\tau(JF(y))=t\rho(JF(y))$. So $\tau=t\rho$ on $\Bb_y$. Thus $\rho$ is a pure state on $\Bb_y$. Since a map from $\coa$ into $\Bb_y$ by $F \mapsto J(F)(y)$ is a Jordan $*$-homomorphism, the map $F \mapsto \pi_{\rho}(J(F)(y))$ is either a homomorphism or an anti-homomorphism. For any $f \in C(X_1)$ and $a \in \ma_1$, if it is a homomorphism, by (\ref{3}) we have
\begin{multline*}
\pi_{\rho}(J(f \otimes a)(y))=\pi_{\rho}(J(f\otimes 1)(y))\pi_{\rho}(J(1 \otimes a)(y))\\=f(\varphi_{\rho}(y))\pi_{\rho}(J(1)(y))\pi_{\rho}(J(1 \otimes a)(y))
=f(\varphi_{\rho}(y))\pi_{\rho}(V_y(a))\\=\pi_{\rho}(V_y(f(\varphi_{\rho}(y))a))=\pi_{\rho}(V_y(f \otimes a(\varphi_{\rho}(y)))).
\end{multline*}
If it is an anti $*$-homomorphism,  for  any $f \in C(X_1)$ and $a \in \ma_1$, $\pi_{\rho}(J(f \otimes a)(y))=\pi_{\rho}(V_y(f \otimes a(\varphi_{\rho}(y))))$ holds similarly as above. This implies that $\pi_{\rho}(J(F)(y))=\pi_{\rho}(V_y(F(\varphi_{\rho}(y))))$ holds for any $F \in C(X_1) \otimes \ma_1$. Since $C(X_1) \otimes \ma_1 $ is dense in $C(X_1,\ma_1)$, for any $F \in \coa$, there exists $G_n \in C(X_1) \otimes \ma_1$ such that $\|F-G_n\|_{\infty} \to 0$ as $n \to \infty$. Since every Jordan $*$-homomorphism between $C^{*}$-algebras is  contractive,  we have
\begin{equation*}
\begin{split}
&\|\pi_{\rho}(JF(y))-\pi_{\rho}(V_y(F(\varphi_{\rho}(y)))\|_{B(H_{\rho})}\\
&\le \|\pi_{\rho}(JF(y))-\pi_{\rho}(JG_n(y))\|_{B(H_{\rho})}+\|\pi_{\rho}(V_y(G_n(\varphi_{\rho}(y))))-\pi_{\rho}(V_y(F(\varphi_{\rho}(y)))\|_{B(H_{\rho})}\\
&\le \|JF(y)-JG_n(y)\|_{\ma_2}+\|V_y(G_n(\varphi_{\rho}(y)))-V_y(F(\varphi_{\rho}(y))\|_{\ma_2}\\
&\le \|JF-JG_n\|_{\infty}+\|G_n(\varphi_{\rho}(y)))-F(\varphi_{\rho}(y))\|_{\ma_1}\\
&\le 2\|F-G_n\|_{\infty} \to 0
\end{split}
\end{equation*}
as $n \to \infty$. Thus for any $F \in \coa$ we get 
\[
\pi_{\rho}(JF(y))=\pi_{\rho}(V_y(F(\varphi_{\rho}(y)))), \quad y \in Y_{\rho}.
\]
Finally we shall show that $\varphi_{\rho}:Y_{\rho} \to X_1$ is continuous. Since $\rho \in \ps(\Bb_y)$ for any $y \in Y_{\rho}$, we have $\pi_{\rho}(J(1)(y))=\id_{H_{\rho}}$. By (\ref{3}), we get $\pi_{\rho}(J(f \otimes 1)(y))=f(\varphi_{\rho}(y))\id_{H_{\rho}}$ for any $f \in C(X_1)$ and $y \in Y_{\rho}$. Let $\{y_{\lambda}\} \subset Y_{\rho}$ be a net with $y_{\lambda} \to y_0 \in Y_{\rho}$. For any $f \in C(X_1)$ we have $\pi_{\rho}(J(f \otimes 1)(y_\lambda)) \to \pi_{\rho}(J(f\otimes 1)(y_0))$. Hence we get $f(\varphi_{\rho}(y_{\lambda})) \to f(\varphi_{\rho}(y_0))$ for any $f \in C(X_1)$. Let $U$ be a neighbourhood of $\varphi_{\rho}(y_0)$. By Uryzorn's lemma there exists $g \in C(X_1)$ such that $g=0$ on $X_1 \setminus U$ and $g(\varphi_{\rho}(y_0))=1$. Thus there is $\lambda_0$ such that $|g(\varphi_{\rho}(y_\lambda))| \ge \frac{1}{2}$ for all $\lambda \ge \lambda_0$. This implies that $\varphi_{\rho}(y_{\lambda}) \in U$ if $\lambda \ge \lambda_0$. We conclude that $\varphi_{\rho}$ is  continuous. We complete the proof.
\end{proof}

\begin{theorem}\label{T2}
Let $J:\coa \to \cta$ be a Jordan $*$-isomorphism. Then there exist  a continuous map $\varphi_{\rho}: X_2 \to X_1$ for any $\rho \in \ps(\ma_2)$ and  a Jordan $*$-homomorphism  $V_y: \ma_1 \to \ma_2$ for each $y \in X_2$ such that 
\[
\pi_{\rho}(JF(y))=\pi_{\rho}(V_y(F(\varphi_{\rho}(y))))
\]
for all $F \in \coa$ and all $y \in X_2$.
\end{theorem}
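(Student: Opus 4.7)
The strategy is to deduce Theorem \ref{T2} as a direct corollary of Theorem \ref{T1} by checking that, under the surjectivity hypothesis, the associated set $Y_\rho$ of Theorem \ref{T1} fills up all of $X_2$ for every pure state $\rho$ on $\ma_2$. Concretely, let $\mathcal{B}$ denote the $C^{*}$-algebra generated by $J(\coa)$ inside $\cta$. Because $J$ is a Jordan $*$-isomorphism, it is in particular surjective, and therefore $J(\coa)=\cta$; since $\cta$ is already a $C^{*}$-algebra we immediately get $\mathcal{B}=\cta$.

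Next I would verify that $\rho\circ\delta_y\in\ps(\mathcal{B})=\ps(\cta)$ for every $\rho\in\ps(\ma_2)$ and every $y\in X_2$, which forces $Y_\rho=X_2$. The evaluation $\delta_y:\cta\to\ma_2$ is a unital surjective $*$-homomorphism, so $\rho\circ\delta_y$ is certainly a state on $\cta$. To see it is pure, one can either argue via the GNS construction (the GNS representation of $\rho\circ\delta_y$ is $\pi_\rho\circ\delta_y$ on $H_\rho$, which remains irreducible because $\delta_y$ is surjective and $\pi_\rho$ is already irreducible), or check the extremality criterion directly: if $\rho\circ\delta_y=\tfrac{\tau_1+\tau_2}{2}$ with $\tau_1,\tau_2$ states on $\cta$, then both $\tau_i$ vanish on $\ker\delta_y$ (since that ideal is positively generated and $\rho\circ\delta_y$ vanishes there), so each $\tau_i$ factors through $\delta_y$ as a state on $\ma_2$, and the purity of $\rho$ forces $\tau_1=\tau_2=\rho\circ\delta_y$.

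Having established $\mathcal{B}=\cta$ and $Y_\rho=X_2$, I would then simply invoke Theorem \ref{T1} with this particular $\mathcal{B}$: for each $\rho\in\ps(\ma_2)$ one obtains a continuous $\varphi_\rho:X_2\to X_1$ and, for each $y\in X_2$, a Jordan $*$-homomorphism $V_y:\ma_1\to\ma_2$ with
\[
\pi_\rho(JF(y))=\pi_\rho(V_y(F(\varphi_\rho(y)))),\qquad F\in\coa,\ y\in X_2,
\]
which is exactly the conclusion of Theorem \ref{T2}. The main (and essentially only) obstacle is the purity verification for $\rho\circ\delta_y$ on $\cta$; this is where the surjectivity of $J$ is crucial, since without it one only has $\mathcal{B}\subsetneq\cta$ and Proposition \ref{333} gives no guarantee that a given pure state of $\ma_2$ composed with $\delta_y$ restricts to a pure state of $\mathcal{B}$.
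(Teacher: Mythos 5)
Your proposal is correct and follows essentially the same route as the paper: surjectivity of $J$ gives $\mathcal{B}=\cta$, hence $Y_\rho=X_2$ for every $\rho\in\ps(\ma_2)$, and the statement follows by invoking Theorem \ref{T1}. The only difference is that you supply a proof that each $\rho\circ\delta_y$ is a pure state on $\cta$ (both your GNS argument and your extremality argument are sound), whereas the paper simply quotes the description $\ps(\cta)=\{\rho\circ\delta_y \mid \rho\in\ps(\ma_2),\ y\in X_2\}$ as known.
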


\begin{proof}
Since $\ps(\cta)=\{\rdy | \ \rho \in \ps(\ma_2), y \in X_2\}$, we get $Y_{\rho}=X_2$ for any $\rho \in \ps(\ma_2)$. Applying Theorem \ref{T1}, we obtain the theorem.
\end{proof}

The next example shows that the maps $\varphi_{\rho}: \yr \to X_1$ and $V_y : \ma_1 \to \ma_2$ are not always bijections even if $J$ is a Jordan $*$-isomorphism.
\begin{example}
Let  $J: C(\{a\},C(\{b,c\})) \to C(\{a,b\},C(\{c\}))$ be a Jordan $*$-isomorphism given by $JF(a)(c)=F(a)(b)$ and $JF(b)(c)=F(a)(c)$. Let $\delta_{c} \circ \delta_{a}$ be a linear functional on $ C(\{a\},C(\{b,c\}))$, which is defined by $\delta_{c} \circ \delta_{a}(F)=F(a)(c)$ for all $F \in  C(\{a\},C(\{b,c\}))$.  Then $\delta_{c} \circ \delta_{a}$ is a pure state on $C(\{a,b\},C(\{c\}))$. Then $V_a: C(\{b,c\}) \to C(\{c\})$ is a Jordan $*$-homomorphism given by $V_a(g)=J(1 \otimes g)(a)$ for all $g \in C(\{b,c\})$. But $V_a$ is not a Jordan $*$- isomorphism. In addition, $\varphi_{\delta_{c}}:\{a,b\} \to \{a\}$ is a continuous map given by $\varphi_{\delta_{c}}(x)=a$ for any $x \in \{a,b\}$. This is not a homeomorphism.
\end{example}

\begin{remark}
 For any $F \in C(X,\ma)$, we define the zero set of $F$ by $Z(F)=\{x \in X | \ F(x)=0\}$. In \cite[Theorem 12]{LT}, Leung and Tang confirmed a conjecture of Ercan and \"{O}nal in a general setting. As a result, they proved the following. Let $X_1$ and $X_2$ be realcompact spaces.  If  an algebra $*$-isomorphism from $\coa$ onto $\cta$ satisfies  $Z(F) \neq \emptyset \Leftrightarrow Z(TF) \neq \emptyset$ for any $F \in \coa$, then $X_1$ is homeomorphic to $X_2$ and $\ma_1$ is $C^{*}$-isomorphic to $\ma_2$.  Theorem \ref{T2} implies that  if $X_1$ and $X_2$ are compact Hausdorff spaces, every algebra $*$-isomorphism from $\coa$ onto $\cta$ is represented as a weighted composition operator without assumptions on  zero sets. However $X_1$ is not always homeomorphic to $X_2$ and $\ma_1$ is not always $C^{*}$-isomorphic to $\ma_2$. 
\end{remark}

On the other hand if $\ma_1$ and $\ma_2$ are primitive $C^{*}$-algebras,  we obtain that  $X_1$ is homeomorphic to $X_2$ and $\ma_1$ is $C^{*}$-isomorphic to $\ma_2$.

\begin{theorem}\label{btry}
Assume that $\ma_1$ and $\ma_2$ are primitive. Then $J: \coa \to \cta$ be a Jordan $*$-isomorphism if and only if there exist a homeomorphism $\varphi$ from $X_2$ onto $X_1$ and  a Jordan $*$-isomorphism $V_y: \ma_1 \to \ma_2$ for each $y \in X_2$ so that  the maps $y \mapsto V_y$ and $x \mapsto V^{-1}_{\varphi^{-1}(x)}$ are continuous with respect to the strong operator topology such that 
\begin{equation}\label{111}
JF(y)=V_y(F(\varphi(y)))
\end{equation}
for all $F \in \coa$ and $y \in X_2$.
\end{theorem}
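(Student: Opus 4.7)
The plan is to bootstrap Theorem \ref{T2} by exploiting that a primitive $C^*$-algebra admits a pure state whose GNS representation is faithful. The ``if'' direction is a routine verification: the formula (\ref{111}) manifestly defines a Jordan $*$-homomorphism, and continuity of each $JF$ together with the candidate inverse $J^{-1}G(x)=V^{-1}_{\varphi^{-1}(x)}(G(\varphi^{-1}(x)))$ follows from the SOT hypotheses and continuity of $\varphi$ and $\varphi^{-1}$.

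For the ``only if'' direction, I would first choose a pure state $\rho_2\in\ps(\ma_2)$ with $\pi_{\rho_2}$ faithful, which exists because $\ma_2$ is primitive. Applying Theorem \ref{T2} with $\rho=\rho_2$ gives a continuous map $\varphi:=\varphi_{\rho_2}:X_2\to X_1$ and Jordan $*$-homomorphisms $V_y:\ma_1\to\ma_2$ satisfying $\pi_{\rho_2}(JF(y))=\pi_{\rho_2}(V_y(F(\varphi(y))))$; injectivity of $\pi_{\rho_2}$ then removes $\pi_{\rho_2}$ and produces (\ref{111}). Symmetrically, $J^{-1}$ is a Jordan $*$-isomorphism and $\ma_1$ is primitive, so applying Theorem \ref{T2} to $J^{-1}$ with a faithful $\pi_{\rho_1}$ yields a continuous $\psi:X_1\to X_2$ and Jordan $*$-homomorphisms $W_x:\ma_2\to\ma_1$ with $J^{-1}G(x)=W_x(G(\psi(x)))$.

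To match the two decompositions, I would exploit $J^{-1}\circ J=\id$ and $J\circ J^{-1}=\id$. First, $J$ being a Jordan $*$-isomorphism forces $J(1)=1$ (apply $2J(a)=J(1)J(a)+J(a)J(1)$ at $a=J^{-1}(1)$), which gives $V_y(1)=1$ and $W_x(1)=1$. Substituting $F=1_{C(X_1)}\otimes a$ into $J^{-1}(JF)=F$ yields $W_x(V_{\psi(x)}(a))=a$, so $W_x\circ V_{\psi(x)}=\id_{\ma_1}$; analogously $V_y\circ W_{\varphi(y)}=\id_{\ma_2}$. Substituting $F=f\otimes 1_{\ma_1}$ yields $f(x)=f(\varphi(\psi(x)))$ for all $f\in C(X_1)$, and separation of points by $C(X_1)$ forces $\varphi\circ\psi=\id_{X_1}$; symmetrically $\psi\circ\varphi=\id_{X_2}$. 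Hence $\varphi$ is a homeomorphism, $V_y=W_{\varphi(y)}^{-1}$ is a Jordan $*$-isomorphism, and $V^{-1}_{\varphi^{-1}(x)}=W_x$.

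Finally, SOT continuity of $y\mapsto V_y$ follows from the stronger norm continuity of $V_y(a)=J(1\otimes a)(y)$ in $y$ (since $J(1\otimes a)\in\cta$), and the same argument applied to $W_x(b)=J^{-1}(1\otimes b)(x)$ handles $x\mapsto V^{-1}_{\varphi^{-1}(x)}$. I expect the main obstacle to be the simultaneous bookkeeping in paragraph three: because the two applications of Theorem \ref{T2} use different faithful pure states on different algebras, one must ensure the $V_y$ produced by $J$ and the $W_x$ produced by $J^{-1}$ compose to identities on the nose, and must use $J(1)=1$ so that the $f\otimes 1$ substitution collapses cleanly to the scalar $f(\varphi(y))$ rather than $f(\varphi(y))V_y(1)$.
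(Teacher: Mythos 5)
Your proposal is correct and follows essentially the same route as the paper: apply Theorem \ref{T2} to both $J$ and $J^{-1}$, use primitivity to pick pure states with faithful GNS representations to strip off $\pi_\rho$, compose the two decompositions to see that $\varphi$ and $\psi$ are mutually inverse and that $V_y$ is invertible with $V_y^{-1}=W_{\varphi(y)}$, and deduce the (in fact norm) continuity statements from $J(1\otimes a)\in\cta$ and $J^{-1}(1\otimes a)\in\coa$. The only cosmetic difference is that you match the two decompositions via $J(1)=1$ and the elementary tensors $1\otimes a$, $f\otimes 1$, while the paper plugs in general $F$ and uses that $\coa$ and $\cta$ separate points; both work.
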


\begin{proof}
Firstly, suppose that $J$ is of the form described as (\ref{111}) in the statement. We shall prove that $JF \in \cta$ for any $F \in \coa$. Let $\{y_{\lambda}\} \subset X_2$ be a net with $y_{\lambda} \to y_0$. Then 
\begin{equation*}
\begin{split}
&\|JF(y_\lambda)-JF(y)\|_{\ma_2}\\&=\|V_{y_{\lambda}}(F(\varphi(y_{\lambda})))-V_y(F(\varphi(y)))\|_{\ma_2} \\
&\le \|V_{y_{\lambda}}(F(\varphi(y_{\lambda})))-V_{y_{\lambda}}(F(\varphi(y)))\|_{\ma_2}+\|V_{y_{\lambda}}(F(\varphi(y)))-V_y(F(\varphi(y)))\|_{\ma_2}\\
&= \|F(\varphi(y_{\lambda}))-F(\varphi(y))\|_{\ma_2}+\|V_{y_{\lambda}}(F(\varphi(y)))-V_y(F(\varphi(y)))\|_{\ma_2} \to 0,
\end{split}
\end{equation*}
since the map $y \mapsto V_y$ is continuous with respect to the strong operator topology. For any $F,G \in \coa$, we have 
\begin{multline*}
J(F \circ G)(y)=V_{y}((F \circ G)(\varphi(y)))=V_{y}(F(\varphi(y)) \circ G(\varphi(y)))\\=V_{y}(F(\varphi(y))) \circ V_{y}( G(\varphi(y)))=JF(y) \circ JG(y)=(JF \circ JG)(y).
\end{multline*}
In addition we have $JF^{*}(y)=V_y(F^{*}(\varphi(y)))=(V_y(F(\varphi(y))))^{*}=(JF)^{*}(y)$ for any $F \in \coa$ and $y \in X_2$. Thus $J$ is a Jordan $*$- homomorphism. For  any $G \in \cta$, we define a map $F$ on $X_1$ by $F(x)=V^{-1}_{\varphi^{-1}(x)}(G(\varphi^{-1}(x)))$. Let $\{x_{\lambda}\} \subset X_1$ be a net with $x_{\lambda} \to x_0$. Then
\begin{equation*}
\begin{split}
&\|F(x_{\lambda})-F(x_0)\|_{\ma_1}\\&=\|V^{-1}_{\varphi^{-1}(x_{\lambda})}(G(\varphi^{-1}(x_{\lambda})))-V^{-1}_{\varphi^{-1}(x_0)}(G(\varphi^{-1}(x_0)))\|_{\ma_1}\\
&\le \|V^{-1}_{\varphi^{-1}(x_{\lambda})}(G(\varphi^{-1}(x_{\lambda})))-V^{-1}_{\varphi^{-1}(x_{\lambda})}(G(\varphi^{-1}(x_{0})))\|_{\ma_1} \\ & \quad \quad \quad \quad \quad  +\|V^{-1}_{\varphi^{-1}(x_{\lambda})}(G(\varphi^{-1}(x_{0})))-V^{-1}_{\varphi^{-1}(x_0)}(G(\varphi^{-1}(x_0)))\|_{\ma_1}\\
&= \|G(\varphi^{-1}(x_{\lambda}))-G(\varphi^{-1}(x_{0}))\|_{\ma_2}+\|V^{-1}_{\varphi^{-1}(x_{\lambda})}(G(\varphi^{-1}(x_{0})))-V^{-1}_{\varphi^{-1}(x_0)}(G(\varphi^{-1}(x_0)))\|_{\ma_1} \\ 
&\to 0,
\end{split}
\end{equation*}
as $x \mapsto V^{-1}_{\varphi^{-1}(x)}$ is continuous with respect to the strong operator topology. This implies that $F(x_{\lambda}) \to F(x_0)$ as $x_{\lambda} \to x_0$. Thus $F \in \coa$. Moreover for any $y \in X_2$ we have
\begin{equation*}
\begin{split}
J(F)(y)&=J(V^{-1}_{\varphi^{-1}(\cdot)}(G(\varphi^{-1}(\cdot))))(y)\\ &=V_y(V^{-1}_{\varphi^{-1}(\varphi(y))}(G(\varphi^{-1}(\varphi(y)))))=V_y(V^{-1}_y(G(y)))=G(y).
\end{split}
\end{equation*}
So $J$ is surjective. Suppose that $F \neq G \in \coa$, then there is  $x \in X_1$ such that $F(x) \neq G(x)$. Then there exists $y \in X_2$ such that $\varphi(y)=x$ and $V_y(F(\varphi(y))) \neq V_y(G(\varphi(y)))$. Thus $JF \neq JG$. So $J$ is injective. Thus $J$ is a Jordan $*$-isomorphism. 

We shall show that the converse holds. By Theorem \ref{T2}, there exist a continuous map $\varphi_{\rho}: X_2 \to X_1$ for each $\rho \in \ps(\ma_2)$ and  a Jordan $*$-homomorphism  $V_y: \ma_1 \to \ma_2$ for each $y \in X_2$ such that 
\[
\pi_{\rho}(JF(y))=\pi_{\rho}(V_y(F(\varphi_{\rho}(y))))
\]
for all $F \in \coa$ and $y \in X_2$. Since $\ma_2$ is a primitive $C^{*}$-algebra, there is a $\rho \in \ps(\ma_2)$ such that $\pi_{\rho}$ is faithful. We define $\varphi:X_2 \to X_1$ by $\varphi=\varphi_{\rho}$. Then we have
\[
JF(y)=V_y(F(\varphi(y))).
\]
We show that $\varphi$ is a homeomorphism. Let $y_1,y_2 \in X_2$ such that $\varphi(y_1) = \varphi(y_2)$. As $\ma_1$ is a primitive $C^{*}$- algebra, applying the similar argument to $J^{-1}$, there exist a continuous map $\psi:X_1 \to X_2$ and a Jordan $*$-homomorphism $S_x: \ma_2 \to \ma_1$ for each $x \in X_1$ such that $J^{-1}F(x)=S_x(F(\psi(x)))$, for all $F \in \cta$. Then we have 
\begin{equation}\label{sss}
F(x)=J^{-1}JF(x)=S_x(JF(\psi(x)))=S_x(V_{\psi(x)}F(\varphi(\psi(x))))
\end{equation}
for all $F \in \coa$. As $\coa$ separates the points of $X_1$, we get $\varphi(\psi(x))=x$ for any $x \in X_1$. 
 In addition we also have 
\begin{equation}\label{ss}
F(y)=JJ^{-1}F(y)=V_y(J^{-1}F(\varphi(y)))=V_y(S_{\varphi(y)}(F(\psi(\varphi(y)))))
\end{equation}
for all $F \in \cta$. Since $\cta$ separates the points of $X_2$, we obtain $\psi(\varphi(y))=y$ for any  $ y \in X_2$.
Since  $\varphi \circ \psi=\id_{X_1}$ and $\psi \circ \varphi=\id_{X_2}$, $\varphi$ is a bijection and $\varphi^{-1}=\psi$. As $X_1$ and $X_2$ are compact Hausdorff spaces and $\varphi$ is a continuous map, $\varphi$ is a homeomorphism. Moreover by (\ref{sss}),  (\ref{ss}) and  $\varphi^{-1}=\psi$, we have 
\[
F(x)=S_x\circ V_{\psi(x)}(F(x)), \quad F \in \coa, \quad x \in X_1.
\]
and 
\[
F(y)=V_y \circ S_{\varphi(y)}(F(y)), \quad F \in \cta, \quad y \in X_2
\]
For any $y \in X_2$, we get  $S_{\varphi(y)}\circ V_{y}=\id_{\ma_1}$ and $V_y \circ S_{\varphi(y)}=\id_{\ma_2}$. Hence $V_y$ is a bijection and $V_y$ is a Jordan $*$-isomorphism such that $V^{-1}_{y}=S_{\varphi(y)}$. Let $\{y_{\lambda}\} \subset X_2$ be a net with $y_{\lambda} \to y_0$. For any $a \in \ma_1$, $\|V_{y_{\lambda}}(a)-V_{y_0}(a)\|_{\ma_2}=\|J(1\otimes a)(y_{\lambda})-J(1 \otimes a)(y_0)\|_{\ma_2} \to 0$ as $J(1 \otimes a) \in \cta$. For any $x \in X_1$, $V^{-1}_{\varphi^{-1}(x)}=S_{\varphi(\varphi^{-1}(x))}=S_x$. Similarly, let $\{x_{\lambda}\} \subset X_1$ be a net with $x_{\lambda} \to x_0$. For any $a \in \ma_2$, $\|V^{-1}_{\varphi^{-1}(x_{\lambda})}(a)-V^{-1}_{\varphi^{-1}(x_0)}(a)\|_{\ma_1}=\|S_{x_{\lambda}}(a)-S_{x_0}(a)\|_{\ma_1}=\|J^{-1}(1 \otimes a)(x_{\lambda})-J^{-1}(1 \otimes a)(x_0)\|_{\ma_1} \to 0 $ because $J^{-1}(1 \otimes a) \in \coa$. We complete the proof.
\end{proof}

\section{Jodan $*$-homomorphisms on $\lxm$ }\label{LC}
 In this section we investigate Jordan $*$-homomorphisms between spaces of Lipschitz maps taking values in a unital $C^*$-algebra. 
Throughout this section,  $X$, $X_1$ and $X_2$ are compact metric spaces. 
We see that $\lxm$ is a non-commutative Banach $*$-algebra with identity. In addition we also define the algebraic tensor product. Let $f \in \Lip(X)$ and $a \in \ma$, where $\Lip(X)$ is the space of all complex valued Lipschitz functions on $X$. We define $f \otimes a(x)=f(x)a$ for any $x \in X$. The algebraic tensor product space of $\Lip(X)$ and $\ma$ will denoted by $\Lip(X) \otimes \ma$. It is easy to see that $\Lip(X) \otimes \ma \subset \lxm$. 
Let $J: \lxom \to \lxtm$ be a Jordan $*$-homomorphism. Because it is not known whether Jordan $*$-homomorphisms in this setting are continuous or not, we assume that $J$ is continuous with $\|\cdot\|_{L}$. If $a$ is an element of a unital Banach algebra, $\sigma(a)$ will denote the spectrum of  $a$ and $r(a)$ its spectral radius. 

\begin{lemma}\label{0}
Then $F$ is an invertible element in $\lxm$ if and only if $F(x)$ is an invertible element in $\ma$ for all $x \in X$. 
\end{lemma}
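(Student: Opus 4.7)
The statement is the standard invertibility criterion for Lipschitz algebras with values in a Banach algebra. The proof splits naturally into two directions.

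The forward direction is essentially formal. If $F$ is invertible in $\lxm$ with inverse $G$, then $FG=GF=1_{\lxm}$, where $1_{\lxm}$ is the constant function with value $1_{\ma}$. Evaluating at any point $x\in X$ yields $F(x)G(x)=G(x)F(x)=1_{\ma}$, so $F(x)$ is invertible in $\ma$ with $F(x)^{-1}=G(x)$. I would dispatch this in one or two lines.

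For the reverse direction, assume $F(x)$ is invertible in $\ma$ for every $x\in X$, and define $G\colon X\to \ma$ pointwise by $G(x)=F(x)^{-1}$. Then $G$ is automatically a pointwise inverse to $F$; the work is in showing $G\in \lxm$, i.e.\ that $G$ is bounded and Lipschitz. First I would observe that $F\colon X\to\ma$ is continuous (since Lipschitz), and inversion is continuous on the open set of invertibles in the unital Banach algebra $\ma$, so $G$ is continuous on the compact space $X$. Consequently $M:=\sup_{x\in X}\|G(x)\|_{\ma}<\infty$, which gives the boundedness needed for the supremum norm part of $\|\cdot\|_{L}$.

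The Lipschitz estimate then follows from the classical resolvent-type identity
\begin{equation*}
G(x)-G(y)=F(x)^{-1}\bigl(F(y)-F(x)\bigr)F(y)^{-1},
\end{equation*}
which yields
\begin{equation*}
\|G(x)-G(y)\|_{\ma}\le M^{2}\,\|F(x)-F(y)\|_{\ma}\le M^{2}L(F)\,d(x,y).
\end{equation*}
Hence $L(G)\le M^{2}L(F)<\infty$, so $G\in \lxm$, and since $FG=GF=1$ pointwise, the same holds in $\lxm$. I do not expect any serious obstacle here; the only conceptual point is invoking compactness of $X$ to get the uniform bound $M$, since without it the pointwise inverses could blow up and the Lipschitz estimate would fail.
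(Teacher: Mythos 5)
Your proposal is correct and follows essentially the same route as the paper: continuity of inversion in $\ma$ plus compactness of $X$ to get the uniform bound on $F^{-1}$, then the identity $F^{-1}(x)-F^{-1}(y)=F^{-1}(x)(F(y)-F(x))F^{-1}(y)$ for the Lipschitz estimate, with the forward direction dismissed as clear. No gaps.
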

\begin{proof}
Let $F \in \lxm$ such that $F(x)$ is an invertible element in $\ma$ for all $x \in X$. Given $x_0 \in X$, let $\{x_n\}$ be a sequence of $X$ with $x_n \to x_0$. Then  $F(x_n) \to F(x_0)$. We set $\operatorname{Inv}(\ma)=\{a \in \ma | \ a \text{\ is \ invertible} \}. $ Since the map from $\operatorname{Inv}(\ma)$ onto $\operatorname{Inv}(\ma)$ by $a \mapsto a^{-1}$ is continuous, we get $F(x_n)^{-1} \to F(x_0)^{-1}$. Thus $F^{-1} \in C(X, \ma)$ and $\|F^{-1}\|_{\infty} < \infty$. For any $x, y \in X$, we get 
\begin{multline*}
\|F^{-1}(x)-F^{-1}(y)\|=\|F^{-1}(x)(F(y)-F(x))F^{-1}(y)\|\\\le \|F^{-1}(x)\|\|F(y)-F(x)\|\|F^{-1}(y)\| \le \|F^{-1}\|_{\infty}^2 \|F(y)-F(x)\|.
\end{multline*} 
We have $L(F^{-1})\le \|F^{-1}\|_{\infty}^2 L(F) <\infty$ and consequently, $F^{-1} \in \lxm$. The converse is clear.
\end{proof}

\begin{lemma}\label{1a}
Suppose that a Jordan $*$-homomorphism $J: \lxom \to \lxtm$ is continuous with $\|\cdot\|_{L}$. Then $J$ is continuous with $\|\cdot\|_{\infty}$.
\end{lemma}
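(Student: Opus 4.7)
The plan is to leverage Lemma \ref{0} to identify, for self-adjoint elements of $\lxm$, the spectral radius with the supremum norm $\|\cdot\|_{\infty}$, and then use the Jordan identity $J(F^{n})=J(F)^{n}$ together with the spectral radius formula to obtain $\|\cdot\|_{\infty}$-contractivity on the self-adjoint part. The general case then follows by splitting $F$ into self-adjoint real and imaginary parts.

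First I would show that for any $F\in\lxm$, Lemma \ref{0} applied to $F-\lambda 1$ gives
\[
\sigma_{\lxm}(F)=\bigcup_{x\in X}\sigma_{\ma}(F(x)).
\]
When $F=F^{*}$, each $F(x)$ is self-adjoint in the $C^{*}$-algebra $\ma$, so $r_{\ma}(F(x))=\|F(x)\|_{\ma}$, and hence $r_{\lxm}(F)=\sup_{x\in X}\|F(x)\|_{\ma}=\|F\|_{\infty}$. Since $J$ is Jordan $*$, a routine induction based on $F^{n+1}=F\circ F^{n}$ shows $J(F^{n})=J(F)^{n}$ for every $n\ge 1$, and $J$ sends self-adjoints to self-adjoints. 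The telescoping identity $F^{n}(x)-F^{n}(y)=\sum_{k=0}^{n-1}F^{k}(x)(F(x)-F(y))F^{n-1-k}(y)$ yields $\|F^{n}\|_{\infty}\le\|F\|_{\infty}^{n}$ and $L(F^{n})\le n\|F\|_{\infty}^{n-1}L(F)$, so that $\|F^{n}\|_{L}^{1/n}\to\|F\|_{\infty}$ as $n\to\infty$.

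For self-adjoint $F\in\lxom$ these ingredients combine as
\begin{align*}
\|J(F)\|_{\infty}
&=r_{\lxtm}(J(F))=\lim_{n}\|J(F^{n})\|_{\infty}^{1/n}\\
&\le\lim_{n}\|J(F^{n})\|_{L}^{1/n}\le\lim_{n}\bigl(\|J\|\,\|F^{n}\|_{L}\bigr)^{1/n}=\|F\|_{\infty},
\end{align*}
where $\|J\|$ denotes the operator norm of $J$ with respect to $\|\cdot\|_{L}$, which is finite by hypothesis. For arbitrary $F\in\lxom$, writing $F=\tfrac{F+F^{*}}{2}+i\cdot\tfrac{F-F^{*}}{2i}$ as a sum of two self-adjoint elements of $\|\cdot\|_{\infty}$-norm at most $\|F\|_{\infty}$ gives $\|J(F)\|_{\infty}\le 2\|F\|_{\infty}$, proving continuity with the supremum norm.

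The main technical point is the spectral-radius identification via Lemma \ref{0}; once it is in hand, the argument essentially reproduces the classical observation that Jordan $*$-homomorphisms between $C^{*}$-algebras are contractive on self-adjoints, with the $C^{*}$-feature imported pointwise from $\ma_{1},\ma_{2}$.
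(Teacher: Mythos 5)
Correct, and essentially the same approach as the paper: both arguments use Lemma \ref{0} to identify the spectral radius of a self-adjoint element of a Lipschitz algebra with its sup norm, the power identity $J(F^{n})=J(F)^{n}$ combined with the spectral radius formula for the complete norm $\|\cdot\|_{L}$ and the bound $\|J(F^{n})\|_{L}\le\|J\|\,\|F^{n}\|_{L}$, and finally the decomposition $F=F_{1}+iF_{2}$ into self-adjoint parts to get $\|JF\|_{\infty}\le 2\|F\|_{\infty}$. The only cosmetic differences are that the paper evaluates pointwise at each $y\in X_{2}$ and uses $r(JF(y))=\|JF(y)\|_{\ma_2}$ in $\ma_{2}$ rather than your global identification $r_{\lxtm}(JF)=\|JF\|_{\infty}$, and your interim claim that $\|F^{n}\|_{L}^{1/n}\to\|F\|_{\infty}$ is valid only for self-adjoint (pointwise normal) $F$ --- which is all you actually use, since for general $F$ that limit is the spectral radius, not $\|F\|_{\infty}$.
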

\begin{proof}
Let $F \in \lxom$ such that $F(x)^{*}=F(x)$ for all $x \in X_1$.  We firstly show that $r(F)=\|F\|_{\infty}$. 
By Lemma \ref{0}, for any $\la \in \sigma(F)$, there exists $x \in X_1$ such that $F(x)-\la \cdot 1$ is not invertible, and $\la \in \sigma(F(x))$. This implies that $|\la| \le r(F(x))=\|F(x)\| \le \|F\|_{\infty}$. As $\la \in \sigma(F)$, we have $r(F) \le \|F\|_{\infty}$. On the other hand, there is $x_0 \in X_1$ such that $\|F(x_0)\|=\|F\|_{\infty}.$ Hence $\|F\|_{\infty}=\|F(x_0)\|=r(F(x_0))=\sup\{ |\la| | \la \in \sigma(F(x_0))\} \le \sup\{ |\la| | \la \in \sigma(F) \} =r(F)$. We conclude that $\|F\|_{\infty}=r(F)$. Fix $y \in X_2$. For any $n \in \mathbb{N}$, 
\begin{equation}\label{mm}
\|(JF)^n(y)\|^{\frac{1}{n}}=\|J(F^n)(y)\|^{\frac{1}{n}}\le \|J(F^{n})\|_{L}^{\frac{1}{n}} \le \|J\|^{\frac{1}{n}}\|F^n\|_{L}^{\frac{1}{n}}.
\end{equation} 
As $F(x)^{*}=F(x)$ for all $x \in X_1$, we have $(JF)^{*}(y)=JF(y)$.  By (\ref{mm}) we obtain
$\|JF(y)\| \le r(F) =\|F\|_{\infty}$, because $\lim_{n \to \infty}\|(JF)^n(y)\|^{\frac{1}{n}}=r(JF(y))=\|JF(y)\|$ and $\lim_{n \to \infty} \|J\|^{\frac{1}{n}}\|F^n\|^{\frac{1}{n}}_L=1 \cdot r(F)$.  As $y \in X_2$ is arbitrary, we have 
\[
\|JF\|_{\infty}\le \|F\|_{\infty}.
\] For any $F \in \lxom$, we set $F_1=\frac{1}{2}(F + F^*)$ and $F_2=-\frac{1}{2}i (F-F^*)$. Then $F_1$ and $F_2$ are self adjoint elements of $\lxom$ and $F=F_1+iF_2$. In addition, we have $\|F_1\|_{\infty} \le \|F\|_{\infty}$ and $\|F_2\|_{\infty} \le \|F\|_{\infty}$. Thus $\|JF\|_{\infty} \le \|JF_1\|_{\infty}+\|JF_2\|_{\infty} \le \|F_1\|_{\infty}+\|F_2\|_{\infty} \le 2\|F\|_{\infty}$ and $J$ is bounded with $\|\cdot\|_{\infty}.$
\end{proof}
Since $\lxm$ is uniformly dense in $C(X,\ma)$, Lemma \ref{1a} implies that every continuous Jordan $*$-homomorphism from $\lxom$ into $\lxtm$ admits a unique extension to a Jordan $*$-homomorphism $J^{\infty}: \coa \to \cta$. 

\begin{theorem}\label{T8}
Let $J$ be a continuous Jordan $*$- homomorphism from $\lxom$ into $ \lxtm$. Then we define a Jordan $*$-homomorphism $J^{\infty}: \coa \to \cta$ as a unique extension of $J$. Let $\Bb$ be the $C^{*}$-algebra generated by $J^{\infty}(\coa)$. For any $\rdy \in \ps(\Bb)$, let $Y_{\rho}=\{x \in X_2 | \ \rho \circ \delta_{x} \in \ps(\Bb) \}$. Then there exist a Lipschitz map $\varphi_{\rho}: Y_{\rho} \to X_1$ and a Jordan $*$-homomorphism $V_y: \ma_1 \to \ma_2$ for every $y \in Y_{\rho}$ such that 
\[
\pi_{\rho}(JF(y))=\pi_{\rho}(V_y(F(\varphi_{\rho}(y))))
\]
for all $F \in \lxom$ and $y \in \yr$.
\end{theorem}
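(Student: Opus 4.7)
The plan is to derive the result by reducing directly to Theorem \ref{T1} applied to the extension $J^{\infty}: \coa \to \cta$. Recall from the remarks preceding the theorem that $J^{\infty}$ exists as a Jordan $*$-homomorphism by Lemma \ref{1a} and the uniform density of $\lxom$ in $\coa$, and that $\Bb$ is by definition the $C^{*}$-algebra generated by $J^{\infty}(\coa)$. Applying Theorem \ref{T1} to $J^{\infty}$, I obtain for each $\rho \circ \delta_{y} \in \ps(\Bb)$ a continuous map $\varphi_{\rho}: Y_{\rho} \to X_{1}$ and, for each $y \in Y_{\rho}$, a Jordan $*$-homomorphism $V_{y}: \ma_{1} \to \ma_{2}$ defined by $V_{y}(a) = J^{\infty}(1 \otimes a)(y)$, satisfying
\[
\pi_{\rho}(J^{\infty}F(y)) = \pi_{\rho}(V_{y}(F(\varphi_{\rho}(y)))), \qquad F \in \coa,\ y \in Y_{\rho}.
\]
Since every $F \in \lxom$ lies in $\coa$ and $J^{\infty}F = JF$ there, and since $1 \otimes a \in \lxom$ gives $V_{y}(a) = J(1 \otimes a)(y)$, the required formula follows immediately from this identity by restriction.

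The only nontrivial extra point, and the one I expect to be the main obstacle, is to upgrade the continuity of $\varphi_{\rho}$ supplied by Theorem \ref{T1} to Lipschitz continuity, using the fact that $J$ (not just $J^{\infty}$) is bounded with respect to $\|\cdot\|_{L}$. The starting point is the identity established in the last paragraph of the proof of Theorem \ref{T1}: since $\rho$ is pure on $\Bb_{y}$ we have $\pi_{\rho}(J(1)(y)) = \id_{H_{\rho}}$, hence
\[
\pi_{\rho}(J(f \otimes 1)(y)) = f(\varphi_{\rho}(y))\, \id_{H_{\rho}}, \qquad f \in \Lip(X_{1}),\ y \in Y_{\rho},
\]
where $f \otimes 1 \in \lxom$ so $J(f \otimes 1) \in \lxtm$ with $L(J(f \otimes 1)) \le \|J\|\,\|f \otimes 1\|_{L}$.

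To produce a Lipschitz constant, I would fix $y_{1}, y_{2} \in Y_{\rho}$ and specialize to the test function $f(x) = d(x, \varphi_{\rho}(y_{1}))$, which lies in $\Lip(X_{1})$ with $L(f) \le 1$ and $\|f\|_{\infty} \le \operatorname{diam}(X_{1})$, so that $\|f \otimes 1\|_{L} \le \operatorname{diam}(X_{1}) + 1$. Taking norms in $B(H_{\rho})$ and using that $\pi_{\rho}$ is contractive,
\[
d(\varphi_{\rho}(y_{1}), \varphi_{\rho}(y_{2})) = |f(\varphi_{\rho}(y_{1})) - f(\varphi_{\rho}(y_{2}))| \le \|J(f \otimes 1)(y_{1}) - J(f \otimes 1)(y_{2})\|_{\ma_{2}},
\]
and the right-hand side is bounded by $\|J\|(\operatorname{diam}(X_{1}) + 1)\, d(y_{1}, y_{2})$. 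This shows $\varphi_{\rho}$ is Lipschitz with constant at most $\|J\|(\operatorname{diam}(X_{1}) + 1)$ and completes the proof; the remainder is a routine transcription from the continuous case handled in Theorem \ref{T1}.
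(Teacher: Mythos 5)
Your proposal is correct and follows essentially the same route as the paper: apply Theorem \ref{T1} to the extension $J^{\infty}$, note $V_y(a)=J(1\otimes a)(y)$, and upgrade continuity of $\varphi_{\rho}$ to Lipschitz continuity by testing with $f(x)=d(x,\varphi_{\rho}(y))$ and using $L(J(f\otimes 1))\le \|J\|\,\|f\otimes 1\|_{L}$. Your constant $\|J\|(\operatorname{diam}(X_1)+1)$ is in fact the correct one (the paper's $\operatorname{diam}(X_2)$ at this point is a harmless slip, since $f$ lives on $X_1$).
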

\begin{proof}
Let $\rdy \in \ps(\Bb)$. By applying Theorem \ref{T1} for a Jordan $*$- homomorphism $J^{\infty}$ from $\coa$ into $\cta$, we only need to show that the continuous map $\varphi_{\rho}: \yr \to X_1$ is Lipschitz continuous. For any $y,z \in \yr$ and $f \in \Lip(X_1)$, we have 
\begin{multline*}
\|\pi_{\rho}(J^{\infty}(f \otimes 1)(y))- \pi_{\rho}(J^{\infty}(f\otimes 1)(z))\|_{B(H_{\rho})} \\ \le \|J(f \otimes 1)(y)-J(f\otimes 1)(z)\|_{\ma_2} \le L(J(f\otimes 1))d(y,z).
\end{multline*}
We get $\pi_{\rho}(J^{\infty}(f \otimes 1)(z))=\pi_{\rho}(V_z(f\otimes 1(\varphi_{\rho}(z))))=\pi_{\rho}(f(\varphi_{\rho}(z))V_{z}(1))=f(\varphi_{\rho}(z))\id_{H_{\rho}}$ for any $f \in \Lip(X_1)$ and $z \in Y_{\rho}$. Fix $y \in Y_{\rho}$. We set $f \in \Lip(X_1)$ by $f(x)=d(x, \varphi_{\rho}(y))$. For any $z \in Y_{\rho}$, we have 
\begin{multline*}
d(\varphi_{\rho}(z), \varphi_{\rho}(y))=\|\pi_{\rho}(J^{\infty}(f \otimes 1)(z))-\pi_{\rho}(J^{\infty}(f \otimes 1)(y))\|_{B(H_{\rho})} \\
\le L(J(f \otimes 1))d(z,y) \le \|J\| \|f\|_{L}d(z,y)\\=\|J\|(\operatorname{diam}(X_2)+1)d(z,y).
\end{multline*}
This implies that $\varphi_{\rho}:\yr \to X_1$ is a Lipschitz map.
\end{proof}

As a corollary of Theorem \ref{T8}, we have the following. This is  a 
slight extension of \cite[Theorem 1]{Oi} because we do not assume that $T(1)=1$ and $X_2$ is connected. 
\begin{cor}
Let $M_1$ and $M_2$ be compact Hausdorff spaces. Suppose that  $T: \Lip(X_1,C(M_1)) \to \Lip(X_2, C(M_2))$ is an algebra homomorphism. Let $\Bb$ be the $C^{*}$-algebra generated by $T^{\infty}(C(X_1, C(M_1)))$. Fix $\delta_{\phi} \circ \delta_{y} \in \ps(\Bb)$, where $y \in X_2$ and $\phi \in M_2$. Then there  exist a Lipschitz map $\varphi_{\delta_{\phi}}: Y_{\delta_{\phi}} \to X_1$  and a  continuous map $\tau: M_2 \to M_1$ such that 
\[
TF(y)(\phi)=F(\varphi_{\delta_{\phi}}(y)))(\tau_y(\phi))
\]
for all $F \in \Lip(X_1,C(M_1))$ and $y \in Y_{\delta_{\phi}}$.
\end{cor}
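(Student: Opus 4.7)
The plan is to specialize Theorem~\ref{T8} to the commutative situation $\ma_1=C(M_1)$, $\ma_2=C(M_2)$ and the pure state $\rho=\delta_\phi\in\ps(\ma_2)$. Since $C(M_2)$ is commutative, every irreducible representation is one-dimensional, so $H_{\delta_\phi}\cong\mathbb{C}$ and $\pi_{\delta_\phi}(a)=\delta_\phi(a)=a(\phi)$. Hence the conclusion of Theorem~\ref{T8} collapses in this case to
\[
TF(y)(\phi)=V_y(F(\varphi_{\delta_\phi}(y)))(\phi),
\]
with a Lipschitz map $\varphi_{\delta_\phi}:Y_{\delta_\phi}\to X_1$ and a Jordan $*$-homomorphism $V_y:C(M_1)\to C(M_2)$ for each $y\in Y_{\delta_\phi}$.

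Next I would extract the weight $\tau_y(\phi)$ by analysing the functional $\delta_\phi\circ V_y:C(M_1)\to\mathbb{C}$. As a Jordan $*$-homomorphism into the commutative algebra $\mathbb{C}$ it is automatically multiplicative. The pure-state hypothesis $\delta_\phi\circ\delta_y\in\ps(\Bb)$, combined with the opening step in the proof of Theorem~\ref{T1}, yields $\pi_{\delta_\phi}(T^\infty(1)(y))=\id$, which in this one-dimensional situation means $V_y(1)(\phi)=1$. Therefore $\delta_\phi\circ V_y$ is a nonzero character on $C(M_1)$, and by Gelfand's theorem there is a unique $\tau_y(\phi)\in M_1$ with $V_y(g)(\phi)=g(\tau_y(\phi))$ for every $g\in C(M_1)$. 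Substituting into the preceding display gives $TF(y)(\phi)=F(\varphi_{\delta_\phi}(y))(\tau_y(\phi))$, as required.

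To complete the argument I would check continuity of $\tau_y$ on the open neighbourhood of $\phi$ where $V_y(1)$ is nonzero: for each fixed $g\in C(M_1)$, the map $\psi\mapsto g(\tau_y(\psi))=V_y(g)(\psi)$ is continuous because $V_y(g)\in C(M_2)$, and a Urysohn separation argument essentially identical to the one at the end of the proof of Theorem~\ref{T1} then forces $\tau_y$ itself to be continuous into $M_1$.

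The main obstacle I anticipate is handling the possible nonunitality of $V_y$: without the assumption $T(1)=1$ used in the earlier version, the functional $\delta_\phi\circ V_y$ could a priori vanish and the weight $\tau_y(\phi)$ would then be undefined. The pure-state assumption $\delta_\phi\circ\delta_y\in\ps(\Bb)$ rules this out precisely at the pairs $(y,\phi)$ that appear in the conclusion, which is also why the statement restricts $y$ to the set $Y_{\delta_\phi}$ and why the weight must be allowed to depend on $y$ in the form $\tau_y(\phi)$ appearing in the displayed formula.
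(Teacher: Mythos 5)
Your reduction of the formula itself is fine and matches the paper's route: once Theorem~\ref{T8} is available, you specialize to $\rho=\delta_\phi$, use that the representation is one-dimensional, observe via the argument in Theorem~\ref{T1} that $V_y(1)(\phi)=1$, and recover $\tau_y(\phi)$ from the character $\delta_\phi\circ V_y$ by Gelfand theory; your care about possible non-unitality of $V_y$ is if anything slightly more precise than the paper, which simply invokes the standard description of $*$-homomorphisms $C(M_1)\to C(M_2)$.

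However, there is a genuine gap at the very first step: the corollary assumes only that $T$ is an \emph{algebra homomorphism}, whereas Theorem~\ref{T8} applies to a Jordan $*$-homomorphism that is \emph{continuous with respect to} $\|\cdot\|_L$ (continuity is also what produces the extension $T^\infty$ via Lemma~\ref{1a}). You use both properties without justification --- you even write $T^\infty$ --- but neither is a hypothesis, and the whole point of the corollary (as a strengthening of \cite[Theorem 1]{Oi}) is that they come for free in this commutative setting. The paper's proof supplies exactly this bridge: $\Lip(X_1,C(M_1))$ and $\Lip(X_2,C(M_2))$ are semi-simple commutative symmetric Banach algebras, so every algebra homomorphism between them is automatically a $*$-homomorphism (compose with characters of the target and use symmetry plus semi-simplicity) and automatically $\|\cdot\|_L$-continuous (automatic continuity of homomorphisms into commutative semi-simple Banach algebras). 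Without this step your appeal to Theorem~\ref{T8} is not licensed, so you should add it before the specialization argument.
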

\begin{proof}
As $\Lip(X_1,C(M_1)) $ and $\Lip(X_2, C(M_2))$ are semi-simple commutative Banach algebras which are symmetric, every algebra homomorphism is an algebra $*$- homomorphism. Moreover any algebra homomorphism between  semi-simple commutative Banach algebras is continuous. Hence, $T$ is continuous with $\|\cdot\|_{L}$. Thus Theorem \ref{T8} implies that there  exist a Lipschitz map $\varphi_{\delta_{\phi}}: Y_{\delta_{\phi}} \to X_1$ and  an algebra $*$- homomorphism $V_y: C(M_1) \to C(M_2)$ for each $y \in Y_{\delta_{\phi}}$ such that 
$\pi_{\delta_{\phi}}(TF(y))=\pi_{\delta_{\phi}}(V_y(F(\varphi_{\delta_{\phi}}(y))))$
for all $F \in \Lip(X_1,C(M_1))$. 
For each $y \in Y_{\delta_{\phi}}$ there is a continuous map $\tau_y$ from $M_2$ into $M_1$ such that $V(a)(\phi)=a(\tau_y(\phi))$ for any $a \in C(M_1)$ and $\phi \in M_1$. Hence we have
\[
TF(y)(\phi)=F(\varphi_{\delta_{\phi}}(y))(\tau_y(\phi))
\]
for all $F \in \Lip(X_1,C(M_1))$ and $y \in Y_{\delta_{\phi}}$.
\end{proof}
\begin{remark}
If $T(1)=1$ and $X_2$ is connected, then the author proved that $T$ preserves constant functions in \cite[Theorem 1]{Oi}. When an algebra $*$-homomorphism $T: \Lip(X_1,C(M_1)) \to \Lip(X_2, C(M_2))$ preserves constant functions, we have $V_x(a)=T(1\otimes a)(x)=T(1\otimes a)(z)=V_z(a)$ for any $x,z \in Y_{\delta_{\phi}}$ and $a \in C(M_1)$. Therefore we define $V: C(M_1) \to C(M_2)$ by $V(a)=V_y(a)$. Then there is a continuous map $\tau$ from $M_2$ into $M_1$ such that $V(a)(\phi)=a(\tau(\phi))$ for any $a \in C(M_1)$ and $\phi \in M_1$. Thus we have 
$TF(y)(\phi)=F(\varphi_{\delta_{\phi}}(y)))(\tau(\phi))$
for all $F \in \Lip(X_1,C(M_1))$ and $y \in Y_{\delta_{\phi}}$. This is the formula given in \cite[Theorem 1]{Oi}.
\end{remark}

We denote the set of all bounded linear operators from $\ma_1$ into $\ma_2$ by $B(\ma_1,\ma_2)$. We say that a map from $X$ into $B(\ma_1,\ma_2)$ by $y \mapsto V_y$ is Lipschitz continuous with respect to the strong operator topology if and only if there is a positive real number $K$ such that $\|V_y(a)-V_{z}(a)\| \le K \|a\| d(y,z)$ for any $a \in \ma_1$ and any $y,z \in X$.

\begin{theorem}\label{T9}
Assume that $\ma_1$ and $\ma_2$ are primitive. Then $J: \lxom \to \lxtm$ is a continuous Jordan $*$-isomorphism if and only if there exist a lipeomorphism $\varphi: X_2 \to X_1$ and  a Jordan $*$-isomorphism $V_y: \ma_1 \to \ma_2$ for each $y \in X_2$, where the map $y \mapsto V_y$ and $x \mapsto V^{-1}_{\varphi^{-1}(x)}$ are Lipschitz continuous with respect to the strong operator topology such that 
\[
JF(y)=V_y(F(\varphi(y)))
\]
for all $F \in \lxom$ and $y \in X_2$.
\end{theorem}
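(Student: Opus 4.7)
The plan is to follow the outline of Theorem \ref{btry}, replacing every continuity ingredient by its Lipschitz counterpart.

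For the sufficiency direction, assume $\varphi$ and $V_y$ are as stated. For $F \in \lxom$ and $y, z \in X_2$, split
\begin{align*}
\|JF(y) - JF(z)\|_{\ma_2} &\le \|V_y(F(\varphi(y))) - V_y(F(\varphi(z)))\|_{\ma_2} \\
&\quad + \|V_y(F(\varphi(z))) - V_z(F(\varphi(z)))\|_{\ma_2}\\
&\le L(F)\, L(\varphi)\, d(y,z) + K\, \|F\|_{\infty}\, d(y,z),
\end{align*}
using that each $V_y$ is a contractive Jordan $*$-homomorphism, $\varphi$ is Lipschitz, and $y \mapsto V_y$ is SOT-Lipschitz with constant $K$. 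Hence $JF \in \lxtm$ and $J$ is bounded with $\|\cdot\|_L$. The Jordan $*$-multiplicativity is checked pointwise as in Theorem \ref{btry}. Surjectivity is shown by defining $F(x) := V^{-1}_{\varphi^{-1}(x)}(G(\varphi^{-1}(x)))$ for $G \in \lxtm$; the analogous splitting now uses the Lipschitz continuity of $\varphi^{-1}$ and SOT-Lipschitz continuity of $x \mapsto V^{-1}_{\varphi^{-1}(x)}$ to show $F \in \lxom$ with $JF = G$. Injectivity is immediate since each $V_y$ is bijective.

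For the necessity direction, continuity of $J^{-1}$ follows from the open mapping theorem, so by Lemma \ref{1a} both $J$ and $J^{-1}$ extend to continuous Jordan $*$-homomorphisms $J^{\infty}: \coa \to \cta$ and $(J^{-1})^{\infty}: \cta \to \coa$. These extensions are mutually inverse on the dense subspaces $\lxom$ and $\lxtm$, hence on the whole, so $J^{\infty}$ is a Jordan $*$-isomorphism; in particular $\Bb := \overline{J^{\infty}(\coa)}^{*} = \cta$. Apply Theorem \ref{T8} to $J$: since $\ma_2$ is primitive, choose $\rho \in \ps(\ma_2)$ with $\pi_\rho$ faithful. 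By Proposition \ref{333} every $\rho \circ \delta_y$ belongs to $\ps(\Bb)$, so $Y_\rho = X_2$. Faithfulness of $\pi_\rho$ then gives $JF(y) = V_y(F(\varphi(y)))$ on $X_2$, where $\varphi := \varphi_\rho$ is Lipschitz. Apply the same procedure to $J^{-1}$, using primitivity of $\ma_1$, to obtain a Lipschitz map $\psi: X_1 \to X_2$ and Jordan $*$-homomorphisms $S_x: \ma_2 \to \ma_1$ with $J^{-1}G(x) = S_x(G(\psi(x)))$.

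Composing $J$ with $J^{-1}$ and using that $\lxom, \lxtm$ separate points, one deduces $\varphi \circ \psi = \id_{X_1}$, $\psi \circ \varphi = \id_{X_2}$, $V_y \circ S_{\varphi(y)} = \id_{\ma_2}$, and $S_{\varphi(y)} \circ V_y = \id_{\ma_1}$, precisely as in Theorem \ref{btry}. Hence $\varphi$ is a lipeomorphism with $\varphi^{-1} = \psi$ and each $V_y$ is a Jordan $*$-isomorphism with $V_y^{-1} = S_{\varphi(y)}$. The SOT-Lipschitz continuity of $y \mapsto V_y$ follows from $V_y(a) = J(1 \otimes a)(y)$ via
\[
\|V_y(a) - V_z(a)\|_{\ma_2} \le L(J(1 \otimes a))\, d(y,z) \le \|J\|\, \|1 \otimes a\|_L\, d(y,z) = \|J\|\, \|a\|_{\ma_1}\, d(y,z),
\]
and the analogous estimate with $J^{-1}$ handles $x \mapsto V^{-1}_{\varphi^{-1}(x)}$. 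The step I expect to be most delicate is the passage from the $\pi_\rho$-weighted identity delivered by Theorem \ref{T8} to the genuine equality $JF(y) = V_y(F(\varphi(y)))$ on all of $X_2$: this requires both the $C^*$-level surjectivity of $J^\infty$ (so that $\Bb = \cta$ and $Y_\rho = X_2$) and the existence of a faithful irreducible representation of $\ma_2$ supplied by primitivity, after which the remaining arguments are straightforward Lipschitz refinements of those in Theorem \ref{btry}.
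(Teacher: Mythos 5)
Your proposal is correct and follows essentially the same route as the paper: direct Lipschitz estimates for sufficiency, and for necessity Theorem \ref{T8} applied to both $J$ and $J^{-1}$, a faithful irreducible representation supplied by primitivity, separation of points to get $\varphi^{-1}=\psi$ and $V_y^{-1}=S_{\varphi(y)}$, and the $J(1\otimes a)$, $J^{-1}(1\otimes a)$ estimates for the SOT-Lipschitz continuity. Your explicit justifications that $J^{-1}$ is $\|\cdot\|_{L}$-continuous (open mapping theorem) and that $Y_{\rho}=X_2$ (via $\Bb=\cta$) merely make precise steps the paper leaves implicit, so the approach is the same.
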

\begin{proof}
Suppose that $J$ is of the form described in the statement. 
Since $\varphi$ is a lipeomorphism, there is a positive number $L(\varphi)$ such that $d(\varphi(y),\varphi(z)) \le L(\varphi)d(y,z)$ for any $y,z \in X_2$.
For any $F \in \lxom$, we have 
\begin{multline*}
\|JF(y)-JF(z)\|_{\ma_2} = \|V_y(F(\varphi(y)))-V_z(F(\varphi(z)))\|_{\ma_2}\\
\le \|V_y(F(\varphi(y)))-V_y(F(\varphi(z)))\|_{\ma_2} + \|V_y(F(\varphi(z)))-V_z(F(\varphi(z)))\|_{\ma_2}\\
\le \|F(\varphi(y))-F(\varphi(z))\|_{\ma_1}+K\|F(\varphi(z))\|d(y,z)\\
\le L(F)L(\varphi)d(y,z)+K\|F\|_{\infty}d(y,z)=(L(F)L(\varphi)+K\|F\|_{\infty})d(y,z).
\end{multline*}
This shows that $JF \in \lxtm$. In addition we have $\|JF\|_{L} \le (1+\max \{ L(\varphi), K\}) \|F\|_{L}$. Applying similar arguments, we obtain that $J^{-1}F \in \lxom$ for all $F \in \lxtm$. Thus  $J$ is a continuous Jordan $*$-isomorphism from $\lxom$ onto $\lxtm$. 

Here, we prove the converse.  By Theorem \ref{btry}, it remains only to show that $\varphi$ is a lipeomorphism and the maps $y \mapsto V_y$ and $x \mapsto V_{\varphi^{-1}(x)}^{-1}$ are Lipschitz continuous with respect to the strong operator topology. We define $\varphi=\varphi_{\rho}$, where $\rho \in \ps(\ma_2)$ such that $\pi_{\rho}$ is a faithful non-zero irreducible representation of $\ma_2$. Thus Theorem \ref{T8} asserts $\varphi$ is a Lipschitz map and $JF(y)=V_y(F(\varphi(y)))$ for any $F \in \lxom$. With a 
similar argument as for $J^{-1}: \lxtm \to \lxom$, there exists a Lipschitz map $\psi: X_1 \to X_2$ and a Jordan $*$-isomorphism $S_x : \ma_2 \to \ma_1$ for each $x \in X_1$ such that $(J^{-1}F)(x)=S_{x}(F(\psi(x)))$. Since $\Lip(X_i, \ma_i)$ separates the points of $X_i$, we get $\varphi \circ \psi=\id_{X_1}$ and $ \psi \circ \varphi=\id_{X_2}$ (This is discussed in more 
detail in the proof of Theorem \ref{btry}).  Thus  $\varphi^{-1}=\psi$ . We conclude that $\varphi$ is a lipeomorphism. For any $a \in \ma_1$, 
\begin{multline*}
\|V_{y}(a)-V_{z}(a)\|=\|J(1 \otimes a)(y)-J(1 \otimes a)(z)\| \\ \le L(J(1 \otimes a))d(y,z) \le \|J\|\|a\|d(y,z)
\end{multline*}
for each $y,z \in X_2$. Since $V^{-1}_{\varphi^{-1}(x)}=S_x$ for any $x \in X_1$,  we have 
\begin{multline*}
\|V^{-1}_{\varphi^{-1}(x)}(a)-V^{-1}_{\varphi^{-1}(z)}(a)\|=\|S_x(a)-S_z(a)\|=\|J^{-1}(1 \otimes a)(x)-J^{-1}(1 \otimes a)(z)\|\\
\le L(J^{-1}(1 \otimes a))d(x,z) \le \|J^{-1}\|\|a\|d(x,z)
\end{multline*}
for any $a \in \ma_1$ and any $x,z \in X_1$. Hence we finish the proof. 
\end{proof}

\begin{remark}
Since $M_{n}(\mathbb{C})$ and $B(H)$ are primitive $C^{*}$-algebras, Theorem \ref{T9} implies that we characterize continuous Jordan $*$-isomorphisms on $\Lip(X,M_n(\mathbb{C}))$ and $\Lip(X,B(H))$ as corollaries. We do not assume that a Jordan $*$-isomorphism preserves constant functions. Thus Theorem \ref{T9} is a generalization of \cite[Theorem 1.1]{BJm} and \cite[Theorem 1.2]{BJb}. 
\end{remark}

\subsection*{Acknowledgments}
This work was supported by JSPS KAKENHI Grant Numbers JP21K13804.

\end{document}